\def\oFF{{\ol\FF}}
\def\Fr{{\rm Fr}}
\def\rholog{\rho^\rmlog}
\def\AA{{\bbA}}
\def\NN{{\bbN}}
\def\LL{{\bbL}}
\def\ZZ{{\bbZ}}
\def\CC{{\bbC}}
\def\QQ{{\bbQ}}
\def\RR{{\bbR}}
\def\FF{{\bbF}}
\def\.{,\dots,}
\def\whLa{{\wh{L^a}}}
\def\spl{{\rm spl}}
\def\topdeg{{\rm tr.c}}
\def\Topdeg{{\rm Tr.c}}
\def\rmlog{{\rm log}}
\def\whKa{{\wh{K^a}}}
\begin{document}

\author{Michael Temkin}
\title{Small extensions of analytic fields}
\thanks{This research was supported by ERC Consolidator Grant 770922 - BirNonArchGeom, by the Israel Science Foundation Grant 1203/22, and by a grant from the School of Mathematics at the Institute for Advanced Study.}
\address{Einstein Institute of Mathematics, The Hebrew University of Jerusalem, Giv'at Ram, Jerusalem, 91904, Israel}
\email{temkin@math.huji.ac.il}
\keywords{Non-archimedean fields, topological transcendence degree}
\begin{abstract}
We study basic properties of transcendental extensions of analytic (i.e. real valued complete) fields and the related notions of transcendence degree. Such an extension $K/k$ is called small if it is topologically-algebraically generated by finitely many elements. We prove that this property is inherited by subextensions and hence topological generating degree of small extensions is monotonic. Much more detailed results are obtained in the case of degree one. Let $k$ be an analytic algebraically closed field of positive residual characteristic $p$ and $K=\wh{k(t)^a}$ with a non-trivial valuation. In a previous work it was shown that the set $I_{K/k}$ of intermediate complete algebraically closed subextensions $k\subseteq F\subseteq K$ is totally ordered by inclusion. In this paper we show that $I_{K/k}$ is an interval parameterized by the distance between $t$ and $F$. Moreover, logarithmic parameterizations induced by other generators differ by PL functions with slopes in $p^\ZZ$ and corners in $|K^\times|$, so $I_{K/k}$ acquires a natural PL structure.
\end{abstract}

\maketitle
%\tableofcontents

\section{Introduction}

\subsection{Recollections}\label{recsec}
Let us recall main results about topological degree that were proved in \cite{topdeg}. These results will be used without further referencing throughout the paper. We will use the abbreviation {\em t.a.} instead of topologically algebraically or topgebraically, as in \cite{topdeg}. Given an extension $L/K$ of analytic fields, a set $S\subset L$ is called {\em t.a. independent} over $K$ if $\wh{K(S')^a}\subsetneq\wh{K(S)^a}$ for any $S'\subsetneq S$, and it is called {\em t.a. generating} if $\whLa=\wh{k(S)^a}$. The minimum of cardinals of t.a. generating sets of $L/K$ is called the {\em topological generating degree} $\Topdeg(L/K)$ and the supremum of all cardinals of t.a. independent sets is called the {\em topological transcendence degree} $\topdeg(L/K)$. For shortness, we use notation similar to Huber's topological transcendence degree $\topdeg$, see \cite[\S1.8]{Huberbook} and \cite[\S21]{diamonds}, but we warn the reader that our terminology is different -- Huber's transcendence degree is the generating degree in our meaning which is denoted $\Topdeg$. Since we consider the two cardinals, our terminology seems more natural. By {\em small} extensions we mean t.a. finitely generated extensions, that is, extensions $L/K$ with $\Topdeg(L/K)<\infty$. Naturally, the extensions with $\Topdeg(L/K)=\infty$ will be called {\em large}.

It is almost obvious that for a tower $F/L/K$ one invariant is monotonic and another one is subadditive: $\topdeg(L/K)\le\topdeg(F/K)$ and $\Topdeg(F/K)\le\Topdeg(L/K)+\Topdeg(F/L)$, but any other property is not clear at all. In fact, one of the main goals of this paper and my motivation to continue the research of \cite{topdeg} was proving Theorem~\ref{monotonic}, which states that $\Topdeg$ is monotonic for small extensions, and essentially reduces to proving that being a small extension is preserved by passing to subextensions. This answers a natural question in Berkovich and adic geometries which was (somewhat implicitly) open since their foundation, and was explicitly stated by Scholze in \cite[Question~21.4]{diamonds}. I am very grateful to Peter Scholze for an email exchange and pointing out that the results of \cite{topdeg} were not sufficient to completely answer this question.

It is shown in \cite{topdeg} that a version of exchange lemma holds for t.a. dependency, hence $\topdeg(L/K)\le\Topdeg(L/K)$, see \cite[Theorem~3.2.1]{topdeg} and its proof. Of course, the two cardinals are equal if there exists a topological transcendence basis -- a set, which is both t.a. independent and generating, and it always exists for small extensions by \cite[Theorem~3.2.3]{topdeg}, but large extensions, such as spherical completions, tend not to have such a basis, see \cite[Theorem~5.3.2]{topdeg}. If the residual characteristic $p=\cha(\tilK)$ is positive, the situation is even stranger as there exist the following pathologies, which are essentially equivalent: there exist non-invertible $\whKa$-endomorphisms of $\wh{K(x)^a}$, there exist towers $\whKa\subsetneq L=\wh{K(x)^a}\subsetneq F=\wh{K(y)^a}$, there exist large extensions $L/K$ which satisfy $\topdeg(L/K)=1$. In particular, the topological transcendence degree is not additive in towers even for small extensions. Finally, it was observed in \cite[Theorem~5.3.9]{topdeg} that if $L$ and $K$ are algebraically closed and $\topdeg(L/K)=1$, then the set $I_{L/K}=\{F_i\}$ of analytic algebraically closed intermediate fields $K\subseteq F_i\subseteq L$ is totally ordered.

\subsection{Main results}
In this paper we answer questions about extensions of finite topological transcendence degree, which were left open or unexplored in \cite{topdeg}. In particular, we prove that if $L/K$ is a small extension of analytic algebraically closed fields, then any intermediate algebraically closed analytic field $F$ is small over $K$, and hence satisfies $\Topdeg(F/K)\le\Topdeg(L/K)$, see Theorem~\ref{monotonic}. Moreover, the equality holds if and only if $F$ and $L$ are $K$-isomorphic, see Theorem~\ref{similarth}. In fact, the claim about small extensions follows by applying Baire category theorem, once we prove the following result, which is of its own interest: any chain of algebraically closed analytic intermediate fields has at most countable cofinality, see Theorem~\ref{subfieldth}. The latter result is reduced to the case of topological transcendence degree one, already covered by Lemma~\ref{subfieldcor}, but later on we prove in Theorem~\ref{Stheorem} the following complete and simple description of $I_{L/K}$ which was unanticipated in \cite{topdeg}: any t.a. generator $t\in L$ induces a bijection $\rho_t\:I_{L/K}\toisom[0,r_K(t)]$ by sending an intermediate field $F$ to $r_F(t)$. In fact, this is a really simple result, which was overlooked in \cite{topdeg} and noticing it recently made all progress of the current paper possible.

Furthermore, we study the following natural question in \S\ref{subfieldsec}: what is the natural structure of the interval $I_{L/K}$ for $L=\wh{K(t)^a}$? We identify it with $[-\log(r),\infty)$ by $-\log\circ\rho_t$ and show in Theorem~\ref{PLth} that any other t.a. generator $t'$ induces a parametrization which differs from this one by a PL-function with slopes in $p^\ZZ$ and corners in $|L^\times|$ (satisfying an addition restriction at the endpoint when $|K^\times|\neq|L^\times|$). Moreover, in Theorem~\ref{PLth2} we show that the converse is also true, obtaining a complete characterization of the set of possible parameterizations of $I_{L/K}$. In particular, $I_{L/K}$ acquires a natural $\ZZ[\frac{1}{p}]$-structure.

Finally, a good control on the subfields of small extensions enables us to study in \S\ref{largesec} large extensions $L/K$ of finite topological transcendence degree and the set of their algebraically closed subfields. In particular, in this case $\Topdeg(L/K)$ is necessarily the first uncountable cardinal $\aleph_1$ and if $d=1$, then $I_{L/K}$ is nothing else but an Alexandroff's long ray.

\section{The monotonicity theorem}
%In this section we will answer various questions which were left open in \cite{topdeg}. This mainly concerns the properties of the topological generating degree $\Topdeg$. In particular, we will prove that it is monotonic in the finite case and that the only case when $\topdeg(K/k)=d<\kappa=\Topdeg(K/k)$ with a finite $d$ happens when $\kappa=\aleph_1$.
In this section we present the fastest route (we know) to prove the monotonicity theorem for small extensions.

\subsection{Countably generated extensions}

\subsubsection{Completed filtered unions}
We will use completed filtered unions of analytic fields, and the question whether the completion is non-trivial will be essential.

\begin{lem}\label{cofinalem}
Assume $K$ is an analytic field and $\{F_i\}_{i\in I}$ is a filtered family of analytic subfields, then:

(i) If $I$ is of countable cofinality and not all $F_i$ are trivially valued, then $F=\cup_{i\in I}F_i$ is not complete.

(ii) Conversely, $F$ is complete in any of the following cases: the cofinality is one, the fields are trivially valued, or any countable subset in $I$ is bounded (for example, the family is a chain of an uncountable cofinality).
\end{lem}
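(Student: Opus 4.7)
Part (ii) can be dispatched case by case. If the cofinality is one, then $F = F_{i_0}$ is itself an analytic field, hence complete. If every $F_i$ is trivially valued then so is $F$, and a trivially valued field is automatically complete because its Cauchy sequences are eventually constant. In the remaining case any Cauchy sequence $\{x_n\}_{n\in\NN}$ in $F$ involves only countably many indices $i_n$ with $x_n\in F_{i_n}$; by hypothesis this countable set has an upper bound $i_0\in I$, so the sequence lies entirely in $F_{i_0}$ and converges there.

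For (i) my plan is to construct an explicit divergent Cauchy series $\sum_{n\ge 1} a_n$ in $F$. First I would extract a strictly increasing cofinal chain $F_1\subsetneq F_2\subsetneq\cdots$, relabeling for convenience; this is possible because the cofinality is exactly $\aleph_0$ (any finite cofinal subset would collapse to cofinality one by directedness). Since the family is filtered and some $F_i$ carries a nontrivial valuation, I may further assume along this chain that each $F_n$ is nontrivially valued and contains a common element $\pi\in F_1$ with $0<|\pi|<1$.

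For each $n$ the next step is to choose $a_n\in F_{n+1}$ which is approximately orthogonal to $F_n$ in the sense that $d(a_n,F_n)\ge\tfrac{1}{2}|a_n|$. Such an $a_n$ is manufactured from any $c_n\in F_{n+1}\setminus F_n$ in two moves: since $F_n$ is complete, hence closed in $K$, one has $d(c_n,F_n)>0$, so subtracting a near-optimal element $f_n\in F_n$ brings $|c_n-f_n|$ arbitrarily close to this distance; afterwards, multiplying by a suitable power of $\pi$ prescribes the absolute value $|a_n|=\beta_n$ freely without affecting the ratio $d(a_n,F_n)/|a_n|$. Choose $\{\beta_n\}$ strictly decreasing to zero with $\beta_{n+1}<\tfrac{1}{2}\beta_n$. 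Then $s_N:=a_1+\cdots+a_N$ is Cauchy in $F$, and if its limit $s$ lay in some $F_N$ then $T_N:=s-s_{N-1}=\sum_{k\ge N}a_k$ would also lie in $F_N$, yielding
\[
d(a_N,F_N)\le |a_N-T_N|\le\max_{k>N}|a_k|=\beta_{N+1}<\tfrac{1}{2}\beta_N\le d(a_N,F_N),
\]
a contradiction, so $F$ fails to contain $s$ and is therefore incomplete.

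The main obstacle is arranging approximate orthogonality together with a prescribed, rapidly decreasing norm: this is precisely where both the nontriviality of the valuation (for the scaling by $\pi$) and the completeness of each $F_n$ (for closedness and positive distance) are consumed. The filtered structure enters only to extract the chain and to share the common $\pi$; once the orthogonal elements are in hand, the telescoping contradiction above is routine.
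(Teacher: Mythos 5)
Your part (ii) is exactly the paper's argument: the first two cases are immediate, and in the third case a Cauchy sequence meets only countably many indices, hence lies in a single complete $F_{i_0}$ and converges there. For part (i) you take a genuinely different route. The paper extracts the same strictly increasing cofinal chain and then invokes the Baire category theorem: each $F_j$ is closed and nowhere dense in the completion $\hatF$, so $F=\cup_j F_j\neq\hatF$. You instead unwind this into an explicit divergent Cauchy series $\sum_n a_n$, manufacturing $a_n\in F_{n+1}$ with $d(a_n,F_n)\ge\tfrac12|a_n|$ (completeness of $F_n$ gives closedness and $d(c_n,F_n)>0$; a near-optimal translate gives the ratio; scaling by powers of $\pi$ shrinks the norm) and then ruling out a limit in any $F_N$ by the telescoping estimate. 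This is correct, and it is essentially the classical ``approximate orthogonality'' proof that a nontrivially normed complete group is not a countable union of proper closed subgroups, i.e. a hands-on special case of Baire; note that both proofs consume the nontriviality of the valuation at the analogous spot (you for the scaling by $\pi$, the paper for the nowhere-density of a proper closed subfield). Two small points: you cannot literally prescribe $|a_n|=\beta_n$ for an arbitrary target, since only the values $|\pi|^m|b_n|$ with $m\in\ZZ$ are reachable; either pick the $\beta_n$ recursively among reachable values or require $\beta_{n+1}<\tfrac{|\pi|}{2}\beta_n$, which leaves the final inequality chain intact. Also, your extraction of a strictly increasing cofinal chain (and the tail on which all $F_n$ are nontrivially valued and contain a common $\pi$) is the same reading of ``countable cofinality'' that the paper's proof uses, so no discrepancy there. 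What your approach buys is a self-contained, quantitative argument avoiding Baire; what the paper's buys is brevity.
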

\begin{proof}
(i) Choose a cofinal increasing subsequence, which we denote for shortness $F_0\subsetneq F_1\subsetneq\dots$. Then $F=\cup_{j=0}^\infty F_j$ and each $F_j$ is nowhere dense in $\hatF$. Therefore $F\neq\hatF$ by Baire category theorem.

(ii) The first two cases are trivial, so assume that any countable subset is bounded. Any Cauchy sequence $a_0,a_1,\dots$ of elements of $F$ lies already in a countable union of $F_i$'s, and hence is contained in a single field $F_j$. So, the limit already lies in some $F_j$.
\end{proof}

\begin{rem}
If $F_0\subsetneq F_1\subsetneq\dots$ is a strictly increasing sequence of analytic $k$-fields such that $F_i=\wh{k(x_i)^a}$ and $F$ is the completed union, then $\topdeg(F/k)=1$ because any pair $\{x,y\}\subset F$ which is t.a. independent over $k$ can be slightly deformed by \cite[Theorem~3.1.9]{topdeg} but no $F_i$ contains a t.a. independent pair since $\topdeg(F_i/k)=1$. It follows that any $x\in F\setminus\cup_i F_i$ is a t.a. generator of $F$ over $k$ because for any $i$ the pair $x,x_i$ is t.a. dependent, $x\notin F_i$, and hence $x_i\in\wh{k(x)^a}$. Already this simple observation was missed in \cite{topdeg}. In particular, it implies that the answer to \cite[Question~5.3.7]{topdeg} is negative -- for any $k$-endomorphism $\phi$ of $K=\wh{k(t)^a}$ the completed union of $\phi^{-n}(K)$ is again generated by a single element.
\end{rem}

The argument from the above remark extends to countable unions of analytic $k$-fields of bounded topological generating degree.

\begin{theor}\label{unionth}
Let $K/k$ be an extension of analytic fields, $\{F_i\}_{i\in I}$ a filtered family of intermediate fields of at most countable cofinality and $F$ the completion of $\cup_i F_i$. If $d=\max_i(\Topdeg(F_i/k))$ is finite, then $\Topdeg(F/k)=d$.
\end{theor}
\begin{proof}
We can replace $K$ and all other fields by $\whKa$ and completed algebraic closures in $\whKa$ as this does not change the invariant $\Topdeg$. So in the sequel all fields are assumed to be algebraically closed. Note that $\topdeg(F/k)=d$ and hence $\topdeg(L/k)\le d$ for any intermediate extension $k\subseteq L\subseteq F$. Indeed, otherwise there exist elements $x_0\.x_d\in F$ t.a. independent over $k$, and by \cite[Theorem~3.1.9]{topdeg} a small enough perturbation $x'_0\. x'_d$ of $x_0\.x_d$ is also t.a. independent over $k$. In particular, we can achieve that $x'_0\.x'_d$ lie in $\cup_iF_i$ and hence also in some $F_i$, which contradicts that $\topdeg(F_i/k)=\Topdeg(F_i/k)\le d$.

We will run induction on $d$ with the base $d=0$ being obvious. Since $\topdeg(F_i/k)=\Topdeg(F_i/k)$, we have that $\Topdeg(F_i/K)\le \Topdeg(F_j/K)$ for $i\le j$. So, we can replace the family by a cofinal sequence $F_0\subseteq F_1\subseteq\dots$ such that $\Topdeg(F_i/k)=d$ for any $i$. If the sequence stabilizes, the claim is obvious, so assume that it does not and hence by Lemma~\ref{cofinalem} there exists $x\in F\setminus\cup_iF_i$. Set $k'=\wh{k(x)^a}$ and $F'_i=\wh{F_i(x)^a}$ and note that $F$ is the completed union of $F'_i$. We claim that $\Topdeg(F'_i/k')\le d-1$, and once we prove this the theorem will follow as follows: by the induction assumption $\Topdeg(F/k')\le d-1$ and hence $\Topdeg(F/k)\le d-1+\Topdeg(k'/k)\le d$. Therefore $\topdeg(F/k)=\Topdeg(F/k)$ and the monotonicity of the invariant $\topdeg$ implies that in fact $\Topdeg(F/k)=d$.

To prove the claim we choose a topological transcendence basis $x_1\.x_d$ of the small extension $F_i/k$. Then $F'_i$ is t.a. generated over $k$ by the set $S=\{x,x_1\.x_d\}$ and since $\topdeg(F'_i/k)\le d$, there exists a t.a. dependency between the elements. By our construction, $x\notin F_i\supseteq\wh{k(x_1\.x_d)^a}$ and hence there exists $x_m$ lying in $\wh{k(S\setminus\{x_m\})^a}$. Therefore $F'_i=\wh{k(S)^a}=\wh{k(S\setminus\{x_m\})^a}$ and hence $S\setminus\{x,x_m\}$ t.a. generates $F'_i$ over $k'$. In particular, $\Topdeg(F'_i/k')\le d-1$.
\end{proof}

As an immediate corollary we can now extend \cite[Theorem 3.2.3]{topdeg} to the countable cardinal. As we will later see, this is already optimal, as the result fails for the first uncountable cardinal.

\begin{cor}\label{equalcor}
If $K/k$ is an extension of analytic fields such that $\Topdeg(K/k)\le\aleph_0$, then $\topdeg(K/k)=\Topdeg(K/k)$.
\end{cor}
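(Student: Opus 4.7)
The plan is to deduce the corollary from Theorem~\ref{unionth} by exhausting $K$ along a countable chain of small subextensions. First I would dispose of the finite case: if $\Topdeg(K/k)<\infty$, then \cite[Theorem~3.2.3]{topdeg} already provides a topological transcendence basis, whence $\topdeg(K/k)=\Topdeg(K/k)$ and there is nothing more to do. So the new content is the case $\Topdeg(K/k)=\aleph_0$.

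In that case, as in the proof of Theorem~\ref{unionth} I would first replace $K$ by $\whKa$, which affects neither $\topdeg$ nor $\Topdeg$, and then fix a countable t.a. generating set $\{x_n\}_{n\ge 1}$ and set $F_n=\wh{k(x_1\.x_n)^a}$. Then $\{F_n\}$ is a chain of intermediate fields of countable cofinality whose completed union is $K$, and each $F_n$ is small with $\Topdeg(F_n/k)\le n$, so the finite case of the corollary already yields $\topdeg(F_n/k)=\Topdeg(F_n/k)$.

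Now I would apply Theorem~\ref{unionth}: if the sequence $\Topdeg(F_n/k)$ were uniformly bounded by some $d<\infty$, the theorem would give $\Topdeg(K/k)=d$, contradicting $\Topdeg(K/k)=\aleph_0$. Hence $\Topdeg(F_n/k)\to\infty$. By monotonicity of $\topdeg$ in a tower, $\topdeg(K/k)\ge\topdeg(F_n/k)=\Topdeg(F_n/k)$ for every $n$, so $\topdeg(K/k)\ge\aleph_0$. Combined with the general inequality $\topdeg(K/k)\le\Topdeg(K/k)=\aleph_0$, this forces equality.

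No serious obstacle is anticipated; the corollary is essentially a bookkeeping consequence of Theorem~\ref{unionth} together with the finite case from \cite{topdeg}. The only point requiring a moment's attention is to confirm that Theorem~\ref{unionth} genuinely applies to the chain $\{F_n\}$, i.e.\ that its cofinality is countable and that the $F_n$ sit as intermediate fields of $K/k$, but both of these are built into the construction once $K$ has been replaced by $\whKa$.
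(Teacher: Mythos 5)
Your proof is correct and follows essentially the same route as the paper: exhaust $\whKa$ by the chain of small subfields $F_n=\wh{k(x_1\.x_n)^a}$, use that $\topdeg=\Topdeg$ for small extensions, and apply Theorem~\ref{unionth}; the paper merely phrases it as a direct contradiction (assuming $\topdeg(K/k)=d<\infty$ forces $\Topdeg(K/k)\le d$), which is the contrapositive of your unboundedness argument.
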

\begin{proof}
Since $\topdeg(K/k)\le\Topdeg(K/k)$, we should only rule out the possibility that simultaneously $\topdeg(K/k)=d$ and $\Topdeg(K/k)=\aleph_0$. In the latter case $\whKa$ is the completion of $k(x_0,x_1,\dots)^a$ and hence $K$ is the completed union of the sequence $K_n=\wh{k(x_0\.x_n)^a}$. Since $\Topdeg(K_n/k)$ is finite, it coincides with $\topdeg(K_n/k)$ and hence is bounded by $d$. Thus $\Topdeg(K/k)\le d$ by Theorem~\ref{unionth} yielding a contradiction.
\end{proof}

\subsection{Cofinality of subfields of small extensions}
Now we will prove that algebraically closed subextensions of small extensions have countable cofinality.

\subsubsection{One generator}
We start with extensions with a single t.a. generator. The following self-similarity result is hidden in the proof of \cite[Theorem~5.2.2]{topdeg}, so we formulate and prove it for the sake of completeness. As in \cite{topdeg} given an extension $L/K$ and an element $x\in L$ by $r_K(x)=\inf_{c\in K^a}|x-c|$ we denote the distance between $x$ and $K^a$ evaluated in $L^a$.

\begin{lem}\label{subfieldlem}
Assume that $L/K/k$ is a tower of algebraically closed analytic fields such that $\Topdeg(K/k)=\Topdeg(L/k)=1$. Then the analytic $k$-fields $K$ and $L$ are isomorphic and the extension $L/K$ is immediate.
\end{lem}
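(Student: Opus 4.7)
The plan is to establish the two conclusions in order: first that $L/K$ is immediate, then that $K$ and $L$ are abstractly $k$-isomorphic. We may assume $L\neq K$, since otherwise both conclusions are trivial. Fix a t.a.\ generator $y\in L$ over $k$, so $L=\wh{k(y)^a}$; since $K$ is algebraically closed and complete, $y\notin K$ (otherwise $L\subseteq K$). The chain $L=\wh{k(y)^a}\subseteq\wh{K(y)^a}\subseteq L$ forces $L=\wh{K(y)^a}$, so $L$ is t.a.\ one-generated over $K$ as well. Similarly fix a t.a.\ generator $x\in K$. Since $\Topdeg(L/k)=1$ is finite, Theorem~3.2.3 of \cite{topdeg} yields $\topdeg(L/k)=1$, so no pair of elements of $L$ is t.a.\ independent over $k$.

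For the immediacy of $L/K$, suppose towards contradiction there is $z\in L$ witnessing non-immediacy: either $|z|\notin|K^\times|$, or $|z|=1$ and $\tilde z\notin\tilde K$. In either case $z\notin K$, so $\wh{k(x)^a}=K\subsetneq\wh{k(x,z)^a}$. To obtain t.a.\ independence of $\{x,z\}$ over $k$ I must also arrange $x\notin\wh{k(z)^a}$. The locus of $z'\in L$ close to $z$ for which $x\in\wh{k(z')^a}$ is thin --- since $\wh{k(z')^a}$ has $\Topdeg\le1$ and is a rigid function of $z'$ up to small deformations --- so by the perturbation result \cite[Theorem~3.1.9]{topdeg} a small move of $z$ preserves the non-immediacy witness while breaking the relation $x\in\wh{k(z)^a}$. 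This yields a t.a.\ independent pair in $L$, contradicting $\topdeg(L/k)=1$. Hence $L/K$ is immediate.

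For the $k$-isomorphism, immediacy gives $|K^\times|=|L^\times|$ and $\tilde K=\tilde L$. Both $K=\wh{k(x)^a}$ and $L=\wh{k(y)^a}$ correspond to points on the Berkovich affine $k$-line of one of the three non-trivial types, and the shared value group and residue field force the types to match. In types 2 and 3 the abstract $k$-isomorphism class is determined by $(|K^\times|,\tilde K)$, so an explicit iso $K\cong L$ arises by sending $x$ to a t.a.\ generator of $L$ with matching distance-to-$k$ and reduction. In type 4, $L=\wh{K(y)^a}$ is an immediate transcendental extension of $K$, so $y$ is a pseudo-limit of a pseudo-Cauchy sequence in $K$ of transcendental type; concatenating this with a pseudo-Cauchy presentation of $K/k$ produces one for $L/k$ of the same shape as that for $K/k$, yielding the iso.

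The main obstacle is the type 4 subcase of the isomorphism step: making the self-similarity explicit by matching pseudo-Cauchy data for $K/k$ and $L/k$. This is precisely what is hidden in the proof of \cite[Theorem~5.2.2]{topdeg} that the lemma is extracting, and it requires verifying that the completed algebraic closure of $k$ along two pseudo-Cauchy sequences with equivalent limiting behavior yields $k$-isomorphic fields.
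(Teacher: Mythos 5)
There is a genuine gap, and it sits at the foundation of your plan: the immediacy step cannot be carried out by the perturbation argument you propose. \cite[Theorem~3.1.9]{topdeg} only says that a tuple which \emph{is} t.a.\ independent stays independent under small perturbations; it does not let you create independence by a small move. Worse, the ``thin locus'' claim is false: since $\topdeg(L/k)=1$, for \emph{every} $z'\in L\setminus K$ the pair $\{x,z'\}$ is t.a.\ dependent, and because $z'\notin\wh{k(x)^a}=K$ the only possible dependence is $x\in\wh{k(z')^a}$. So the locus you want to avoid is all of $L\setminus K$, and the only way to break the relation $x\in\wh{k(z')^a}$ is to push $z'$ into $K$, which destroys the non-immediacy witness. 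Hence no contradiction can be extracted along these lines, and your second half then inherits the problem, since the isomorphism argument takes $|K^\times|=|L^\times|$ and $\tilK=\tilL$ as input. Even granting immediacy, the type~4 case of the isomorphism (the one you flag as ``the main obstacle'') is left unproved: the $k$-isomorphism class of $\wh{k(y)^a}$ for a type~4 point is not determined by coarse invariants, and ``concatenating pseudo-Cauchy data of the same shape'' is not an argument.

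The paper proceeds in the opposite order, and the missing idea is a density-plus-Krasner approximation that handles all types at once. After ruling out the trivially valued case, write $K=\wh{k(x)^a}$ and $L=\wh{k(y)^a}$; since $k(y)^a$ is dense in $L$ and $r_k(x)>0$, choose $y'\in k(y)^a$ with $|x-y'|<r_k(x)$. By \cite[Lemma~3.1.6]{topdeg} this gives $\wh{k(y')}\toisom\wh{k(x)}$, hence $\wh{k(y')^a}\toisom K$, while $k(y)^a$ is algebraic over $k(y')$, so $L=\wh{k(y')^a}$; thus $L\toisom K$ as analytic $k$-fields with no case analysis by type. Immediacy is then a \emph{consequence} of the isomorphism: the invariants $E$ (rational rank of the value group over $k$) and $F$ (residual transcendence degree) agree for $K/k$ and $L/k$, so by additivity $E_{L/K}=F_{L/K}=0$, and since $\tilk$ is algebraically closed and $|K^\times|$ divisible this forces $\tilL=\tilK$ and $|L^\times|=|K^\times|$. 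If you want to salvage your outline, this is the order in which the two assertions must be proved.
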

\begin{proof}
We can assume that the valuation on $K$ is non-trivial as the other case is easily ruled out by classical tools. Choose generators $K=\wh{k(x)^a}$ and $L=\wh{k(y)^a}$. Since $k(y)$ is dense in $L$ and $r_k(x)>0$, we can find an element $y'\in k(y)^a$ such that $|x-y'|<r_k(x)$. Then $k(y)^a$ is algebraic over $k(y')$ and hence $L=\wh{k(y')^a}$. In addition, $\wh{k(y')}\toisom\wh{k(x)}$ by \cite[Lemma~3.1.6]{topdeg}, and hence the analytic $k$-fields $K$ and $L$ are also isomorphic. The latter also implies that $E_{L/K}=E_{L/k}-E_{K/k}=0$ and $F_{L/K}=F_{L/k}-F_{K/k}=0$, where we use the notation $F_{L/K}=\trdeg(\tilL/\tilK)$ and $E_{L/K}$ is the $\QQ$-dimension of $|L^\times|^\QQ/|K^\times|^\QQ$. Since $\tilk$ is algebraically closed and $|K^\times|$ is divisible this implies that $\tilL=\tilK$ and $|L^\times|=|K^\times|$, that is, the extension $L/K$ is immediate.
\end{proof}

Now, we can bound the ordered set $I_{K/k}$.

\begin{lem}\label{subfieldcor}
Let $K/k$ be an extension of algebraically closed analytic fields such that $\Topdeg(K/k)=1$ and let $I_{K/k}=\{F_i\}$ denote the set of analytic algebraically closed intermediate fields $k\subseteq F_i\subseteq K$. Then any t.a. generator $t\in K$ induces an injective map $\rho_t\:I_{K/k}\into[0,r_k(t)]$, which sends $F\in I_{K/k}$ to $r_F(t)$.
\end{lem}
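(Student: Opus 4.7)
The plan is to argue by contradiction. Suppose $F, F' \in I_{K/k}$ are distinct with $r_F(t) = r_{F'}(t)$. Since $I_{K/k}$ is totally ordered (recalled in \S\ref{recsec}) we may assume $F \subsetneq F'$, and write $r = r_F(t) = r_{F'}(t)$. If $r = 0$ then $t \in F$, forcing $K = \wh{k(t)^a} \subseteq F$ and hence $F = F' = K$, a contradiction; so $r > 0$. Choosing any $b \in F' \setminus F$ and replacing $F'$ by $\wh{F(b)^a}$---which by monotonicity still satisfies $r_{F'}(t) = r$---we may assume $\Topdeg(F'/F) = 1$. Since $r_F(t) > 0$ also implies $\Topdeg(K/F) = 1$, Lemma~\ref{subfieldlem} applies to the tower $K/F'/F$ and produces an $F$-isomorphism $\phi\colon K \toisom F'$ of analytic fields with $K/F'$ immediate; in particular $|K^\times| = |F'^\times|$ and $\wt{K} = \wt{F'}$.

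The argument now splits on whether the infimum $r$ is realized from $F'$. Suppose first that $|t - a| = r$ for some $a \in F'$. Immediacy gives $r \in |K^\times| = |F'^\times|$, so we pick $c \in F'$ with $|c| = r$. Then $s = (t-a)/c \in K$ has $|s| = 1$, and its reduction $\wt{s}$ lies in $\wt{K} = \wt{F'}$. Lifting $\wt{s}$ to $f \in F'$ with $|s - f| < 1$ yields $|t - (a + cf)| = |c|\,|s - f| < r$ with $a + cf \in F'$, contradicting $r_{F'}(t) = r$.

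In the remaining case $r$ is not realized from $F'$---and hence not from $F \subseteq F'$ either---so there exists $(a_n) \subset F$ with $|t - a_n|$ strictly decreasing to $r$; this sequence is pseudo-Cauchy and $t$ is a pseudo-limit, giving $|t - a_n| = |a_{n+1} - a_n|$ for all $n$. Because $\phi$ fixes $F$ pointwise and is isometric, $|\phi(t) - a_n| = |\phi(t - a_n)| = |t - a_n|$ for every $n$. The ultrametric inequality applied to $t - \phi(t) = (t - a_n) + (a_n - \phi(t))$, whose summands have the same norm $|t - a_n|$, yields $|t - \phi(t)| \le |t - a_n|$; passing to the infimum gives $|t - \phi(t)| \le r$. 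Combined with $|t - \phi(t)| \ge r_{F'}(t) = r$ (since $\phi(t) \in F'$), this forces $|t - \phi(t)| = r$, so the infimum is in fact realized by $\phi(t) \in F'$, contradicting the case assumption. The crux is this second case: a direct approximation from within $F'$ is unavailable, and the abstract isomorphism $\phi$ produced by Lemma~\ref{subfieldlem} is essential for exhibiting an explicit element of $F'$ at distance exactly $r$ from $t$.
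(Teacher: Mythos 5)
Your proof is correct and follows essentially the same route as the paper: reduce to a simple subextension $F'=\wh{F(b)^a}$, invoke Lemma~\ref{subfieldlem} for both the $F$-isomorphism $\phi\colon K\toisom F'$ and the immediacy of $K/F'$, and get a contradiction because $\phi(t)\in F'$ would realize the distance $r_{F'}(t)=r$, which is impossible in an immediate extension. The only differences are presentational: you prove the two ingredients inline rather than citing them (the bound $|t-\phi(t)|\le r_F(t)$, which in fact follows from the ultrametric estimate $|t-\phi(t)|\le\max(|t-a|,|\phi(t)-a|)=|t-a|$ for \emph{every} $a\in F$, so your case split on attainment is avoidable; and the fact that an attained distance contradicts immediacy, which the paper phrases via the generalized Gauss valuation on $L(t)$).
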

\begin{proof}
We should prove that if $l\subsetneq L$ are two different elements of $I_{K/k}$, then the inequality $r_L(t)\le r_l(t)$ is strict. We will argue by a contradiction, so assume that $r_L(t)=r_l(t)$. Since $K=\wh{l(t)^a}$, we can replace $k$ by $l$. Thus, we assume in the sequel that $k=l$ and $r_k(t)=r_L(t)$. In addition, decreasing $L$ can only increase $r_L(t)$, so we can replace $L$ by a subfield of the form $\wh{k(y)^a}$ with $y\in L\setminus k$. Then we still have that $r_L(t)=r_k(t)$ and by Lemma \ref{subfieldlem} there exists a $k$-isomorphism of analytic fields $\phi\:L=\wh{k(t)^a}\toisom K$.

Set $x=\phi(t)$ and note that $|x-t|\le r_k(x)=r_k(t)$. Since $r_k(t)=r_L(t)\le|x-t|$ we obtain that in fact $|x-t|=r_L(t)$. This implies that the valuation on $L(x-t)$ is a generalized Gauss valuation. In particular, $L(t)/L$ is not immediate, hence $K/L$ is not immediate and this contradicts the second assertion of Lemma~\ref{subfieldlem}.
\end{proof}

\begin{rem}
We will prove later by an explicit construction (essentially taken from \cite{topdeg}) that each $\rho_t$ is a bijection. But we even do not need the surjectivity to prove the monotonicity theorem, as already at this stage one can easily show using Theorem~\ref{unionth} that any intermediate field $F_i$ is of the form $\wh{k(x_i)^a}$.
\end{rem}

\subsubsection{Finitely many generators}
Even more generally than in the above remark, we can now bound subextensions of an arbitrary small extension.

\begin{theor}\label{subfieldth}
Let $K/k$ be a small extension of algebraically closed analytic fields. Then any chain $\{F_i\}_{i\in I}$ of intermediate algebraically closed analytic fields is of at most countable cofinality.
\end{theor}
\begin{proof}
Assume to the contrary that the cofinality is uncountable. Decreasing $I$ we can then assume that it is well-ordered and uncountable. We will run induction on $d=\Topdeg(K/k)$, with the case of $d=0$ being trivial. So, assume that $d\ge 1$ and choose a topological transcendence basis $x=x_1\.x_d$ of $K/k$. Set $k'=\wh{k(x)^a}$ and note that $\Topdeg(K/k')=d-1$. The fields $F'_i=\wh{F_i(x)^a}$ form a chain of intermediate subfields of $K/k'$, hence by induction assumption this chain stabilizes at some $i_0$ such that there exist at most countably many elemenets $i<i_0$. Removing these elements and renaming $i_0$ by $0$ we can assume that $F'_i=F'_0$ for any $i\in I$. Therefore, $\{F_i\}_{i\in I}$ is an uncountable well-ordered chain of intermediate fields for the extension $\wh{F_0(x)^a}/F_0$ and by Corollary~\ref{subfieldcor}, there exists an order reversing embedding of $I$ into the interval $[0,r_{F_0}(x)]$, which is an absurd.
\end{proof}

\subsection{Finite monotonicity of $\Topdeg$}

\subsubsection{The monotonicity theorem}
As a corollary we can prove that the topological generating degree is monotonic in the finite case. In particular, this provides a positive answer to \cite[Question~21.4]{diamonds}.

\begin{theor}\label{monotonic}
Let $K\subseteq E\subseteq F\subseteq L$ be a tower of analytic fields such that the extension $L/K$ is small. Then $\Topdeg(F/E)\le\Topdeg(L/K)$, in particular, $F/E$ is small.
\end{theor}
\begin{proof}
We should prove that $\Topdeg(F/K)\le\Topdeg(L/K)$, as $\Topdeg(F/E)\le\Topdeg(F/K)$ in the obvious way. Replacing the fields with their completed algebraic closures does not affect the invariants $\Topdeg$, so we can assume that the fields are algebraically closed. If $\Topdeg(F/K)<\infty$, then the topological generating degrees coincide with the topological transcendence degree, and the assertion follows from monotonicity of $\topdeg$. Thus, we should only rule out the possibility of $\Topdeg(F/K)=\infty$.

By transfinite induction there exists a maximal strictly increasing chain $F_i$ ordered by ordinals $i\le \omega$, where $\omega$ is the maximal ordinal in the chain, such that $F_0=K$, for any $i<\omega$ one has that $F_{i+1}=\wh{F_i(x_i)^a}$, and for any limit ordinal $j\le\omega$ the field $F_j$ is the completion of $\cup_{i<j}F_i$. In particular, $F_\omega=F$ by the maximality assumption. Then the cardinal $\omega$ is countable by Theorem~\ref{subfieldth}, hence $F$ is topologically generated over $k$ by countably many elements $x_i$, and therefore by Corollary~\ref{equalcor} $\Topdeg(F/K)=\topdeg(F/K)\le\topdeg(L/K)<\infty$.
\end{proof}

\subsubsection{The self-similarity theorem}
Also, one can now strengthen the self-similarity lemma by extending it to any finite topological degree.

\begin{theor}\label{similarth}
Let $L/K$ be a small extension of algebraically closed analytic fields and $F$ an intermediate algebraically closed analytic field. Then $L$ and $F$ are isomorphic as analytic $K$-fields if and only if the inequality $\topdeg(F/K)\le\topdeg(L/K)$ is an equality. In addition, in this case the extension $L/F$ is immediate.
\end{theor}
\begin{proof}
Only the inverse implication is non-trivial, so assume that both topological transcendence degrees are equal to $d$.  We claim that for an element $y\in L\setminus F$ the $K$-field $E=\wh{F(y)^a}$ is isomorphic to $F$ and the extension $E/F$ is immediate. Since $L$ is t.a. finitely generated over $F$ (even over $K$), the theorem will follow by induction on the number of t.a. generators.

Now, let us prove the claim. By Theorem~\ref{monotonic} $F/K$ is small, hence possesses a topological transcendence basis $x_1\.x_d$. The set $\{y,x_1\.x_d\}$ is t.a. dependent over $K$, and up to renumbering the elements $x_1\in \wh{K(y,x_2\.x_d)^a}$. In particular, the latter field coincides with $E$, and setting $K'=\wh{K(x_2\.x_d)^a}$ we obtain that $F=\wh{K'(x_1)^a}\subset E=\wh{K'(y)^a}$. So, by Lemma~\ref{subfieldlem} $F$ and $E$ are isomorphic even as analytic $K'$-fields and the extension $E/F$ is immediate.
\end{proof}

\section{Subfields of $\wh{k(t)^a}$}\label{subfieldsec}
Throughout this section $K/k$ is an extension of algebraically closed analytic fields such that $p=\cha(\tilk)>0$, $|K^\times|\neq 1$ and $\Topdeg(K/k)=1$. Often we will also fix a t.a. generator $t$ and then $K=\wh{k(t)^a}$ and $r=r_k(t)>0$. Our goal is to study the set $I_{K/k}$ of intermediate algebraically closed analytic fields.

\subsection{Ordered structure of $I_{K/k}$}
We already know that $\rho_t\:I_{K/k}\into[0,r_k(t)]$ by Lemma~\ref{subfieldcor}, so our goal is prove that it is onto by constructing an intermediate field $F$ of a prescribed radius $r_F(t)$. The trick which was already used in \cite{topdeg} is to construct an extension $L/K$ with $\Topdeg(L/k)=1$ instead, and then use an isomorphism $L\toisom K$ which sends $K$ to a subfield $F$.

\subsubsection{Endomorphisms}
We start with recalling how endomorphisms of $\wh{k(t)^a}$ are constructed. It is easy to see that if $K/k$ is an extension of analytic fields and $x,t\in K$ satisfy $|x-t|\le r_k(t)$, then $k(x)\toisom k(t)$ as valued fields (in particular, if $x\in k$, then $t=x$) and hence one obtains an isomorphism of analytic $k$-fields $\wh{k(x)}\toisom\wh{k(t)}$ sending $x$ to $t$. This is proved in \cite[Lemma~6.3.2]{temst} when $|x-t|<r_k(t)$ but the same argument applies when the equality holds. If $K$ is algebraically closed, then this extends further to an isomorphism of subfields $\wh{k(x)^a}\toisom\wh{k(t)^a}$ of $K$. In particular, we obtain the following result:

\begin{lem}\label{psilem}
Assume that $k$ is an algebraically closed analytic field and $K=\wh{k(t)^a}$. Then for any $k$-endomorphism $\psi\:K\to K$ the element $x=\psi(t)$ satisfies $|x-t|\le r_k(t)=r_k(x)$. Conversely, for any $x$ with $|x-t|\le r_k(t)$ there exists a $K$-endomorphism $\psi$ taking $t$ to $x$, and such $\psi$ is unique up to an action of $\Gal(\wh{k(t)}$.
\end{lem}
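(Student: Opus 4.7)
The plan is to dispatch the forward direction via the automatic isometry of any $k$-endomorphism of $K$, and to build $\psi$ in the converse by extending a valued $k$-field isomorphism $k(t)\toisom k(x)$ to a $k$-endomorphism of $K$.

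First, for the forward direction, any $k$-endomorphism $\psi\:K\to K$ is automatically an isometry, because the valuation on the algebraically closed field $K$ is the unique extension of that of $k$ and hence coincides with its pullback along $\psi$. Consequently, for every $c\in k$ one has $|x-c|=|\psi(t-c)|=|t-c|$, so $r_k(x)=r_k(t)$. The ultrametric inequality then yields $|x-t|=|(x-c)-(t-c)|\le|t-c|$ for every $c\in k$, and passing to the infimum over $c\in k$ gives $|x-t|\le r_k(t)$.

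For the converse, I would invoke the construction from \cite[Lemma~6.3.2]{temst}. Assume $|x-t|\le r_k(t)$; the assignment $t\mapsto x$ is to extend to an isometric $k$-isomorphism of valued fields $k(t)\toisom k(x)\subseteq K$. The content is the equality $|f(t)|=|f(x)|$ for any $f\in k[T]$: factoring $f=c\prod_i(T-a_i)$ over $k=k^a$ reduces this to $|x-a_i|=|t-a_i|$ for each root, which follows from the ultrametric inequality whenever $|t-a_i|>|x-t|$, since then $|x-a_i|=\max(|x-t|,|t-a_i|)=|t-a_i|$. Completing yields $\wh{k(t)}\toisom\wh{k(x)}\subseteq K$; since $K$ is algebraically closed and contains $\wh{k(x)^a}$, a choice of extension to algebraic closures followed by completion produces the desired $k$-endomorphism $\psi\:K=\wh{k(t)^a}\to K$ with $\psi(t)=x$.

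For uniqueness, any two such $\psi$'s coincide on $k(t)$ and hence on $\wh{k(t)}$ by continuity, and the remaining freedom in extending to $\wh{k(t)^a}=K$ is precisely the choice of a $\wh{k(t)}$-embedding into $K$, i.e., an element of $\Gal(\wh{k(t)^a}/\wh{k(t)})$. The step I expect to be the main obstacle is the boundary case $|x-t|=r_k(t)$ in the converse: here the simple ultrametric trick can fail on individual factors $a_i\in k$ with $|t-a_i|=r_k(t)$, and one needs the slightly more refined verification (emphasized in the cited reference) that the polynomial norms $|f(t)|$ and $|f(x)|$ nevertheless agree, so that the isomorphism $k(t)\toisom k(x)$ is well defined.
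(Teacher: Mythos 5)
Your overall route is the same as the paper's (reduce to an isometric $k$-isomorphism $k(t)\toisom k(x)$, complete, extend to algebraic closures inside $K$, and read off uniqueness from the Galois action), but there are two genuine problems. First, your justification of isometry in the forward direction is wrong: the valuation of $K$ is \emph{not} the unique extension of that of $k$, since $K/k$ is transcendental (already $k(t)$ carries many extensions of the valuation of $k$, e.g. Gauss valuations of all radii). The fact you need -- that a $k$-endomorphism of analytic fields is isometric -- is built into the notion of morphism of analytic fields used here (alternatively, a continuous endomorphism pulls the absolute value back to an equivalent absolute value agreeing with it on $k$). Granting isometry, your computation $|x-c|=|t-c|$ for $c\in k$, hence $r_k(x)=r_k(t)$ and $|x-t|\le|t-c|$ for all $c$, is fine and is what the paper intends.

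Second, and more seriously, the converse is exactly where the content lies and you have left its crux open: you verify $|x-a_i|=|t-a_i|$ only when $|t-a_i|>|x-t|$ and defer the boundary case $|x-t|=r_k(t)=|t-a_i|$ to \cite[Lemma~6.3.2]{temst}; but, as the paper itself points out, that lemma covers only the strict inequality, so the deferral does not close the gap -- the extension to the equality case is precisely the paper's added claim. The missing step is the symmetric ultrametric estimate, which uses in addition that $|x-t|\le r_k(x)$: for every $a\in k$ one has $|x-a|\le\max(|x-t|,|t-a|)=|t-a|$ because $|x-t|\le r_k(t)\le|t-a|$, and conversely $|t-a|\le\max(|t-x|,|x-a|)=|x-a|$ because $|x-t|\le r_k(x)\le|x-a|$; factoring over $k=k^a$ then gives $|f(x)|=|f(t)|$ for all $f\in k[T]$. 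Some such control on $r_k(x)$ is genuinely needed at the boundary: if $t$ generates a Gauss (type 2) point of radius $1$ and $x\in\kcirc$, then $|x-t|=1=r_k(t)$ yet no endomorphism can send $t$ to $x$ -- this is what the parenthetical ``if $x\in k$ then $t=x$'' preceding the lemma is about, and in all applications either $|x-t|<r_k(t)$ (where your argument works, as it always does in the immediate case, since there the infimum $r_k(t)$ is not attained) or $|x-t|\le r_k(x)$ holds as in the forward direction. The remaining steps -- completion, isometric extension to algebraic closures (unique extension of the valuation over the complete field $\wh{k(t)}$), and uniqueness up to the Galois group of $\wh{k(t)}$ -- are as in the paper and are fine.
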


\subsubsection{Recollections on completed differentials}
Next, we recall the following relation between differentials and t.a. dependence.

\begin{lem}\label{diflem}
Let $K/k$ be an extension of analytic field and $t\in K$ such that $K=\wh{k(t)}$. Then the following conditions are equivalent:

(i) $k^s\cap K$ is dense in $K$ (in particular, $t\in\whka$),

(i) $\hatd_{K/k}(t)=0$,

(iii) $d_{\Kcirc/\kcirc}(t)$ is infinitely divisible.
\end{lem}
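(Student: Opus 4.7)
The plan is to establish the cycle $(i) \Rightarrow (ii) \Rightarrow (iii) \Rightarrow (i)$, with the closing implication carrying the substantive content. The pair $(ii) \Leftrightarrow (iii)$ is essentially formal and the implication $(i) \Rightarrow (ii)$ is a routine continuity argument; only $(iii) \Rightarrow (i)$ is a real theorem.

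The implication $(i) \Rightarrow (ii)$ would be a continuity-plus-separability computation. Any element $s \in k^s \cap K$ satisfies $f(s) = 0$ for some separable polynomial $f \in k[X]$, and differentiating yields $f'(s)\,\hatd s = 0$ with $f'(s) \neq 0$, so $\hatd s = 0$ in $\wh\Omega_{K/k}$. The map $\hatd \: K \to \wh\Omega_{K/k}$ is continuous with respect to the Banach norm on $K$ and the K\"ahler seminorm on $\wh\Omega_{K/k}$ (in fact bounded by the identity), so if $s_n \in k^s \cap K$ converges to $t$ one obtains $\hatd t = \lim_n \hatd s_n = 0$.

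For $(ii) \Leftrightarrow (iii)$ I would invoke the construction of $\wh\Omega_{K/k}$ as arising from $\Omega_{\Kcirc/\kcirc}$ by Hausdorff completion in the topology of $\kcirc$, which in the presence of residue characteristic $p$ coincides with the $p$-adic topology. The kernel of the natural map from $\Omega_{\Kcirc/\kcirc}$ to its Hausdorff completion is exactly $\bigcap_n p^n \Omega_{\Kcirc/\kcirc}$. Because every positive integer coprime to $p$ is a unit in $\kcirc$, this intersection coincides with the infinitely divisible submodule (in the sense of divisibility by every positive integer), and the subsequent localization needed to pass from $\wh\Omega_{\Kcirc/\kcirc}$ to $\wh\Omega_{K/k}$ does not alter the vanishing locus of elements that are already infinitely $p$-divisible.

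The main obstacle is $(iii) \Rightarrow (i)$. Here I would rely on the K\"ahler-seminorm formula for differentials of analytic fields from the author's earlier work (e.g.\ \cite{temst}): the seminorm of $\hatd t$ in $\wh\Omega_{K/k}$ equals, up to normalization, the distance $\inf_{s \in k^s \cap K}|t - s|$. Vanishing of $\hatd t$ therefore places $t$ in the closure of $k^s \cap K$, and density of $k^s \cap K$ in the whole of $K = \wh{k(t)}$ is then automatic: polynomials in $t$ with $k$-coefficients are dense in $K$, and feeding in approximations of $t$ by separable-algebraic elements produces approximations of any element of $K$ by elements of $k^s \cap K$. The irreducible input is the K\"ahler-seminorm formula itself, which converts the algebraic vanishing of $\hatd t$ into a genuine metric approximation; I see no direct route that bypasses it.
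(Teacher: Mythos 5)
The weak point is the step you yourself identify as carrying all the weight, (iii)$\Rightarrow$(i). The ``K\"ahler-seminorm formula'' you invoke --- that the seminorm of $\hatd t$ in $\wh\Omega_{K/k}$ equals the distance from $t$ to $k^s\cap K$ --- is not available in this generality, and as an identity it already fails in the simplest purely inseparable example: take $k=\FF_p((s))$ and $K=\wh{k(t)}=k(t)$ with $t=s^{1/p}$. Then $k^s\cap K=k$, so the distance is $|s|^{1/p}$, whereas $\Omega_{\Kcirc/\kcirc}=\Kcirc\,dt$ is free of rank one (since $d(t^p)=0$ in characteristic $p$), so the K\"ahler seminorm of $\hatd t$ is $1$. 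The exact results in the literature of this kind are radius formulas over an algebraically closed (or similarly special) ground field, with distance measured to $k$ itself; the attribution to \cite{temst} is off, as that reference contains stability results, not seminorm computations. More importantly, in the qualitative form in which you actually use it --- $\hatd_{K/k}(t)=0$ if and only if $t$ lies in the closure of $k^s\cap K$ --- the statement is precisely the nontrivial content of the equivalence (i)$\Leftrightarrow$(ii), which the paper imports from \cite[Lemma~2.4]{untilt}. So at the crux you are citing (inaccurately) the very fact to be proved. The surrounding pieces are fine: (i)$\Rightarrow$(ii) by continuity of $\hatd$, and the upgrade from ``$t$ is approximable by elements of $k^s\cap K$'' to density of $k^s\cap K$ in $K=\wh{k(t)}$ via rational functions in $t$, are both correct.

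The treatment of (ii)$\Leftrightarrow$(iii) is also not as formal as claimed. First, ``infinitely divisible'' here means divisible in the $\Kcirc$-module $\Omega_{\Kcirc/\kcirc}$ by all pseudo-uniformizers (equivalently by all powers $\pi^n$ of one of them), not by all positive integers; and in equal characteristic $p$ one has $p=0$ in $\kcirc$, so the $p$-adic topology on $\kcirc$ is discrete and your identification of the relevant topology with the $p$-adic one breaks down. Second, $\wh\Omega_{K/k}$ is obtained from the $\pi$-adic completion $\wh\Omega_{\Kcirc/\kcirc}$ by inverting $\pi$, so $\hatd_{K/k}(t)=0$ only says that the image of $d_{\Kcirc/\kcirc}(t)$ in $\wh\Omega_{\Kcirc/\kcirc}$ is $\pi$-power torsion; to deduce (iii) one must exclude a nonzero torsion image, which the kernel computation $\bigcap_n\pi^n\Omega_{\Kcirc/\kcirc}$ does not address. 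That torsion analysis is the actual content of \cite[Lemma~4.1.11]{topdeg}. For comparison, the paper's own proof consists exactly of these two citations: \cite[Lemma~2.4]{untilt} for (i)$\Leftrightarrow$(ii) and \cite[Lemma~4.1.11]{topdeg} for (ii)$\Leftrightarrow$(iii); your proposal replaces them by a false exact formula and an incomplete completion argument, so the hard implications remain unproved.
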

\begin{proof}
The equivalence of (i) and (ii) is covered by \cite[Lemma~2.4]{untilt}, and the equivalence of (ii) and (iii) follows from \cite[Lemma~4.1.11]{topdeg}.
\end{proof}

\subsubsection{Deeply ramified points of geometric type 4}
As in \cite{topdeg}, fancy extensions are constructed in this paper using deeply ramified extensions which force some differentials to vanish and hence lead to a t.a. dependence. More specifically, let $k$ be a non-trivially valued analytic field. It is proved in \cite[Theorem~5.1.12]{topdeg} that if $k$ is not perfectoid, then there exist points $z$ of type 4 with deeply ramified extension $\calH(z)/k$. Moreover, we are going to show that the proof in loc.cit. also yields the following refinement, which provides a control on the geometric radius of $z$ in a $k$-split disc.

\begin{lem}\label{deepramlem}
Let $k$ be an analytic field such that $p=\cha(\tilk)>0$ and the group $|k^\times|$ is dense. Assume that $t\in\kcirc$ is an element such that $d_{\kcirc}(t)$ is not infinitely divisible in $\Omega_{\kcirc}$ and $r\in(0,1)$. Then there exists a primitive extension $K=\wh{k(x)}$ of type 4 and an algebraic subextension $l/k$ of $K/k$ such that $d_{\lcirc}(t)$ is infinitely divisible, $|x|=1$ and $r_{\whka}(x)=r$.
\end{lem}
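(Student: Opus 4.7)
The lemma refines \cite[Theorem~5.1.12]{topdeg}, which under these hypotheses already constructs a primitive deeply ramified type $4$ extension $K/k$ together with an algebraic subextension $l\subseteq K$ making $d_{\lcirc}(t)$ infinitely divisible, but without prescribing the norm of the generator or its geometric radius. My plan is to revisit that construction and show that, with a little extra bookkeeping, it can be executed subject to the two additional normalizations $|x|=1$ and $r_{\whka}(x)=r$.

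First I would prepare the geometric data up front. Using the density of $|k^\times|$ in $\RR_{>0}$, I fix a strictly decreasing sequence $r_1>r_2>\cdots$ in $|k^\times|$ with $\lim_i r_i=r$, and I pick any $a_0\in k$ of norm $1$ (for instance $a_0=1$). The generator $x$ will be built as the generic point of a nested sequence of closed discs $D_i=\bar D(a_i,r_i)$ whose centers $a_i$ will lie in a tower $k=l_0\subseteq l_1\subseteq\cdots$ of finite separable algebraic extensions produced inductively.

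Next I would rerun the inductive core of \cite[Theorem~5.1.12]{topdeg} with this pre-specified data in place. That induction uses the failure of infinite divisibility of $d_{\kcirc}(t)$ to produce, at each stage, an extension $l_{i+1}/l_i$ and a witness element that improves the $p$-divisibility of the image of $d(t)$ in $\Omega_{l_{i+1}^\circ}$, together with centers $a_i$ arranged so that the nested discs form a genuine type $4$ configuration. I would impose two additional constraints that are compatible with the construction: (a) the new center $a_i$ is chosen inside $D_{i-1}$, which, combined with $|a_0|=1$ and $r_i<1$, forces $|a_i|=1$ by the ultrametric inequality; and (b) the prescribed radii $r_i\in|k^\times|$ are used throughout. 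Setting $l=\bigcup_i l_i$ and taking $x$ to be the generic point of the resulting disc configuration, the extension $K=\wh{k(x)}$ then inherits primitivity and the type $4$ character from loc.\,cit.; one has $|x|=1$ from $|x-a_i|\le r_i<1$ and $|a_i|=1$; the upper bound $r_{\whka}(x)\le r$ is immediate from $|x-a_i|\le r_i\to r$; and the lower bound $r_{\whka}(x)\ge r$ follows from the emptiness of $\bigcap_i D_i$ in $\whka$, since any putative $c\in\whka$ with $|x-c|<r$ would satisfy $|c-a_i|\le r_i$ for all sufficiently large $i$, contradicting type $4$.

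The hard part will be the second step: verifying that the algebraic mechanism of \cite[Theorem~5.1.12]{topdeg} producing the tower $l_i$ and its divisibility witnesses is sufficiently decoupled from the geometric choices, so that the nesting constraint and the prescribed sequence of radii can be imposed without obstructing the induction. The density assumption on $|k^\times|$ should be exactly what makes this decoupling possible: it gives slack both to pick radii in $|k^\times|$ converging to $r$ and to place the new center anywhere inside the previous disc, while the algebraic data defining $l_{i+1}/l_i$ depends only on $l_i$ and not on the precise geometric location of $a_i$ within $D_{i-1}$.
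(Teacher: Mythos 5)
Your overall outline (rerun the construction of \cite[Theorem~5.1.12]{topdeg} while keeping track of the discs, then read off $|x|=1$ and $r_{\whka}(x)=r$ from the nested-disc picture) is the same as the paper's, and your treatment of the easy normalizations is fine: the ultrametric argument for $|x|=1$ and the two-sided estimate for $r_{\whka}(x)$ are correct. But the step you defer as ``the hard part'' is precisely where the content of the lemma lies, and your heuristic for why it should work is wrong in the relevant direction. The containment $l\subseteq K=\calH(z)$ is not automatic for a type 4 point chosen with arbitrarily prescribed centers and radii: it holds only because each disc $E_{n+1}$ is chosen to be $k_{n+1}$-split inside $E_n$, and by \cite[Lemma~5.1.7]{topdeg} this forces the radius to drop at each step by at least the splitting factor, namely $r_{n+1}/r_n<r_\spl(t_{n+1}/k_n)$. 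Consequently the limiting geometric radius is bounded above by $\prod_n r_\spl(t_{n+1}/k_n)$, and for a fixed run of the construction of \cite[Theorem~5.1.12]{topdeg} this product can be smaller than your target $r$. So the geometry is not decoupled from the algebra; the admissible radii are constrained by the algebraic tower, which is the opposite of your claim that the algebraic data ``depends only on $l_i$ and not on the precise geometric location.''

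The paper resolves this by tuning the algebraic input to the target $r$ before doing any geometry: fix $\pi$ with $|p|<|\pi|<1$ (normalizing $t$ in mixed characteristic), take $t_{n+1}$ to be a root of $x^{p^{d_n}}-\pi x-t_n$, and choose the degrees $d_n$ increasing fast enough that $\prod_{n}|\pi|^{1/(p^{d_n}-1)}>r$, using the computation from loc.\ cit.\ that $r_\spl(t_{n+1}/k_n)=|\pi|^{1/(p^{d_n}-1)}$; this choice still makes $d_{\lcirc}(t)$ infinitely divisible. Only after this tuning does the density of $|k^\times|$ enter, to pick $r_0=1,r_1,r_2,\dots$ in $|k^\times|$ converging to $r$ with $r_{n+1}/r_n<r_\spl(t_{n+1}/k_n)$, so that \cite[Lemma~5.1.7]{topdeg} produces $k_n$-split discs of exactly these radii. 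Your proposal never identifies this constraint, nor the degree of freedom (the exponents $d_n$) that is used to satisfy it, so as written the induction you describe would get stuck whenever the prescribed radii shrink more slowly than the splitting radii allow.
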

\begin{proof}
Fix $\pi\in k$ such that $|p|<|\pi|<1$, and if the characteristic is mixed, also replace $t$ by $p^{-n}t$ achieving that $|p|<|t|<1$. Choose a strictly increasing sequence of natural numbers $0<d_0<d_1<\dots$ such that $\prod_{n=o}^\infty |\pi|^{1/(p^{d_n}-1)}>r$. Now, we will construct an extension $l/k$ precisely as in the proof of \cite[5.1.12]{topdeg}: set $t_0=t$, take $t_{n+1}$ to be a root of $f_n(x)=x^{p^{d_n}}-\pi x-t_n$ and define $k_0=k$, $k_{n+1}=k_n(t_{n+1})$ and $l=\cup_n k_n$. A simple computation in the proof \cite[5.1.12]{topdeg} shows that $d_{\lcirc}t$, is infinitely divisible in $\Omega_{\lcirc}$ and  $r_\spl(t_{n+1}/k_{n})=|\pi|^{1/(p^{d_n}-1)}$.

Since $\prod_{n=0}^\infty r_\spl(t_{n+1}/k_{n})>r$ and $|k^\times|$ is dense, there exists a sequence $r_0=1,r_1,r_2,\dots$ of elements of $|k^\times|$ which converges to $r$ and satisfies $r_{n+1}/r_n<r_\spl(t_{n+1}/k_{n})$ for any $n\ge 0$. It then follows from \cite[Lemma~5.1.7]{topdeg} that starting with the unit disc $E_0=\calM(k\{x\})$ one can construct a sequence of discs $E_0\supset E_1\supset E_2\supset\dots$ such that $E_n$ is $k_n$-split and is of geometric radius $r_n$ in $E_0$. The intersection of these discs does not contain $k^a$-points and hence $\cap_n E_n=\{z\}$ is a single point of type 4 of geometric radius $r$. Thus, $K=\calH(z)=\wh{k(x)}$ contains $l$ and $r_{\whka}(x)=\inf_{c\in k^a}|c-x|_z=r$, as required.
\end{proof}

\subsubsection{Completed tame extensions}
In order to deal with discretely valued fields we will have to pass to an appropriate completed tame extension. This makes the group of values dense, and the following result shows that the divisibility properties of the differentials are not changed much.

\begin{lem}\label{tamelem}
Assume that $k$ is an analytic field, $\omega\in\Omega_{\kcirc}$ is an element, $l/k$ a tame algebraic extension and $K=\hatl$. Then $\omega$ is infinitely divisible if and only if the image of $\omega\otimes 1$ in $\Omega_{\Kcirc}$ is infinitely divisible.
\end{lem}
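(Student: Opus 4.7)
My plan is as follows. The forward direction is immediate from functoriality of Kähler differentials: if $\omega=n\omega_n$ in $\Omega_{\kcirc}$ for every $n$, then the image satisfies $\omega\otimes 1=n(\omega_n\otimes 1)$ in $\Omega_{\Kcirc}$, so it is infinitely divisible as well.

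For the converse, I would first reduce to divisibility by powers of $p$. Since $\cha(\tilde k)=p>0$, any positive integer $n$ coprime to $p$ satisfies $|n|=1$ and is therefore a unit in both $\kcirc$ and $\Kcirc$; divisibility by such $n$ is automatic. Hence ``infinitely divisible'' reduces to being $p^N$-divisible for every $N\ge 1$, and it suffices to show that, for each $N$, the natural map $\Omega_{\kcirc}/p^N\to\Omega_{\Kcirc}/p^N$ is injective on the class of $\omega$.

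I would factor $k\subseteq K$ through $l$ and treat the two steps separately. For the completion step $l\hookrightarrow K=\hat l$: density of $\lcirc$ in $\Kcirc$ together with the fact that $p^N$ lies in the maximal ideal yields $\lcirc/p^N\toisom\Kcirc/p^N$, and thus---after the routine identification of algebraic differentials of the completion with those of the original mod $p^N$---the map $\Omega_{\lcirc}/p^N\to\Omega_{\Kcirc}/p^N$ becomes an isomorphism. For the tame algebraic step $k\hookrightarrow l$: write $l=\bigcup_\alpha l_\alpha$ as a filtered union of finite tame subextensions, reducing to injectivity of $\Omega_{\kcirc}/p^N\to\Omega_{l_\alpha^\circ}/p^N$ for each $\alpha$. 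Tameness provides that the ramification index $e_\alpha$ is prime to $p$ and that $\Omega_{l_\alpha^\circ/\kcirc}$ is annihilated by $e_\alpha\pi_{l_\alpha}^{e_\alpha-1}$ (a prime-to-$p$ element in valuation-theoretic sense); combined with flatness of $\kcirc\to l_\alpha^\circ$ and Galois descent applied to the finite Galois closure (valid because $|\Gal|$ is invertible in $\kcirc$), this should yield the required injectivity, and the colimit over $\alpha$ preserves it.

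The main obstacle will lie in the tame algebraic step: precisely controlling the non-zero but ``small'' contribution of $\Omega_{l_\alpha^\circ/\kcirc}$ modulo $p^N$. An alternative and perhaps cleaner route, which I would pursue in parallel, is to recast the problem in terms of completed analytic differentials: by an absolute analog of \cite[Lemma~4.1.11]{topdeg}, infinite divisibility of $\omega\in\Omega_{\kcirc}$ is equivalent to vanishing of its class in $\whOmega_k$, and both tame algebraic extensions (being separable algebraic, hence contributing trivially to $\whOmega$) and completion (by the definition of $\whOmega_K$ as a Banach completion) preserve injectivity on the completed side, giving the conclusion directly.
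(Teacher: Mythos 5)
The central problem is your reading of ``infinitely divisible''. In this paper (and in \cite{topdeg}, whose Lemma~4.1.11 is the bridge between divisibility and completed differentials), an element $\omega\in\Omega_{\kcirc}$ is infinitely divisible when it is divisible by every nonzero element of $\kcirccirc$, i.e.\ by ring elements of arbitrarily small absolute value --- not by all integers. Your reduction to ``$p^N$-divisibility for all $N$'' and the whole mod-$p^N$ strategy is therefore meaningful only in mixed characteristic: in equal characteristic $p$ (which does occur in the applications, e.g.\ in Lemma~\ref{existlem}, where $k$ may even be trivially valued) one has $p=0$ in $\kcirc$, so $p^N$-divisibility would simply mean $\omega=0$, whereas the divisibility actually produced in Lemma~\ref{deepramlem} is by powers of an element $\pi$ with $|p|<|\pi|<1$. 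Relatedly, once $p^N$ must be replaced by a general small element $a$, your ``routine identification'' $\Omega_{\lcirc}/a\toisom\Omega_{\Kcirc}/a$ is no longer routine, because $da\neq 0$; it works for $a=p^N$ only since $d$ kills $\ZZ$.

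The tame algebraic step also has a concrete error and a genuine gap. The justification ``Galois descent \dots valid because $|\Gal|$ is invertible in $\kcirc$'' is false: tame extensions include all unramified (residually separable) extensions, whose degrees can be divisible by $p$; tameness only constrains the inertia part. More importantly, the actual content of the nontrivial direction is that $\omega$ cannot acquire new divisibility in $\Omega_{\Kcirc}$, and this needs exactly the two inputs your sketch does not supply: left-exactness of $0\to\Omega_{\kcirc}\otimes_{\kcirc}\Kcirc\to\Omega_{\Kcirc}\to\Omega_{\Kcirc/\kcirc}\to 0$, which the paper deduces from $H_1(\LL_{\Kcirc/\kcirc})=0$ for separable extensions, and control of the torsion of $\Omega_{\Kcirc/\kcirc}$: its torsion is $\Omega_{\lcirc/\kcirc}\otimes\Kcirc$, which is killed by $\kcirccirc$ by tameness, while $\Omega_{\Kcirc/\lcirc}$ is torsion free, so $\Omega_{\Kcirc/\kcirc}$ has no nonzero infinitely divisible torsion --- this is precisely what rules out divisibility appearing only after base change. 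Your sentence ``this should yield the required injectivity'' sits exactly where that argument is needed; note that even in mixed characteristic the map $\Omega_{\kcirc}/p^N\to\Omega_{\lcirc}/p^N$ can have an almost zero kernel coming from $p^N$-torsion of $\Omega_{\lcirc/\kcirc}$, so plain injectivity mod $p^N$ is not even the right intermediate claim. Finally, the ``alternative cleaner route'' begs the question: asserting that tame extensions and completion ``preserve injectivity on the completed side'' is essentially a restatement of the lemma, since separability of the algebraic step only controls the relative term (the right end of the sequence), not the injectivity on the left that you need.
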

\begin{proof}
Since $K/k$ is separable, $H_1(\LL_{\Kcirc/\kcirc})=0$ by \cite[Theorem~5.2.3(ii)]{Temkintopforms} and hence the sequence
$$0\to\Omega_{\kcirc}\otimes_{\kcirc}\Kcirc\to\Omega_{\Kcirc}\to\Omega_{\Kcirc/\kcirc}\to 0$$ is exact. In the same way we have the exact sequence
$$0\to\Omega_{\lcirc/\kcirc}\otimes_{\kcirc}\lcirc\to\Omega_{\Kcirc/\kcirc}\to\Omega_{\Kcirc/\lcirc}\to 0,$$ whose first term is a torsion module annihilated by $\kcirccirc$, and whose third term is torsion free by \cite[Lemma~5.2.9]{Temkintopforms}. Thus, from the second sequence we obtain that $\Omega_{\Kcirc/\kcirc}$ contains no non-zero infinitely divisible torsion elements, and this combined with the first sequence implies the lemma.
\end{proof}

\subsubsection{Construction of towers with $\topdeg$ defect}
Now we can construct towers of simple extensions in which the topological transcendence degree is non-additive. This refines \cite[Theorems~5.2.2]{topdeg} by providing control on radii of generators, but we use essentially the same argument.

\begin{lem}\label{existlem}
Assume that $F/k$ is a non-trivial extension of algebraically closed analytic fields such that $F=\wh{k(t)^a}$ is non-trivially valued and $\cha(\tilk)>0$. Then for any positive $r<r_k(t)$ there exists an extension $K=\wh{F(x)^a}$ of $F$ such that $|x-t|<r_k(t)=r_k(x)$, $r=r_F(x)$ and $K=\wh{k(x)^a}$.
\end{lem}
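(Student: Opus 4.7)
The proof adapts the construction of \cite[Theorem~5.2.2]{topdeg}---which exhibits $\topdeg$-defect via deeply ramified Artin--Schreier towers---by carefully tracking the geometric radii to hit the prescribed value $r_F(x) = r$. The hypotheses of Lemma~\ref{deepramlem} are satisfied for the base field $F$: since $F = \wh{k(t)^a}$ is algebraically closed and non-trivially valued, $|F^\times|$ is divisible and hence dense; non-infinite-divisibility of $d_{F^\circ}(t)$ in $\Omega_{F^\circ}$ follows (via Lemma~\ref{diflem}) from the fact that $t$ is t.a.\ transcendental over $k$, so that $d_{F^\circ/k^\circ}(t)$ is already not infinitely divisible and a fortiori neither is $d_{F^\circ}(t)$.

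Fix $s \in (r/r_k(t), 1)$, which is nonempty since $r < r_k(t)$, and apply Lemma~\ref{deepramlem} with base field $F$, element $t$, and radius $s$. This produces a primitive type-$4$ extension $\wh{F(x')}/F$ with $|x'|=1$ and $r_F(x') = s$, together with an algebraic subextension $l/F$ in $\wh{F(x')}$ such that $d_{l^\circ}(t)$ is infinitely divisible. Choose $\lambda \in F^\times$ with $|\lambda| = r/s \in (r, r_k(t))$ (possible by density of $|F^\times|$) and set $x := \lambda x' + t$, $K := \wh{F(x')^a} = \wh{F(x)^a}$. Direct verification gives $|x - t| = |\lambda| < r_k(t)$, whence $r_k(x) = r_k(t)$ by the ultrametric inequality (since $|t - c| \ge r_k(t) > |x - t|$ for every $c\in k$), and $r_F(x) = |\lambda|\cdot r_F(x') = |\lambda|\,s = r$.

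It remains to show $K = \wh{k(x)^a}$. The inclusion $\wh{k(x)^a}\subseteq K$ is clear; for the reverse, it suffices to prove $t \in \wh{k(x)^a}$, since then $F = \wh{k(t)^a}\subseteq\wh{k(x)^a}$, and hence $K=\wh{F(x)^a}\subseteq\wh{k(x)^a}$. Following \cite[Theorem~5.2.2]{topdeg}, the inclusion $l\subseteq\wh{F(x)}\subseteq K$ combined with infinite divisibility of $d_{l^\circ}(t)$ yields infinite divisibility of the relative differential $d(t) \in \Omega_{\wh{k(x,t)^\circ}/\wh{k(x)^\circ}}$, and Lemma~\ref{diflem} applied to $\wh{k(x,t)}/\wh{k(x)}$ then gives $t \in \wh{k(x)^a}$. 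The technical heart---descending the absolute divisibility of $d_{l^\circ}(t)$ to the relative divisibility over $\wh{k(x)^\circ}$---is carried out exactly as in \cite[Theorem~5.2.2]{topdeg} and constitutes the main obstacle; the new contribution here is the radius bookkeeping, handled by the affine scaling $x = \lambda x' + t$, which leaves the absorbing tower $l/F$ inside $\wh{F(x)} = \wh{F(x')}$ unchanged while adjusting the location and geometric radius of the type-$4$ point.
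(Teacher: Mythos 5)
The decisive step of your argument fails: you apply Lemma~\ref{deepramlem} with base field $F=\wh{k(t)^a}$, but its main hypothesis --- that $d_{F^\circ}(t)$ is not infinitely divisible in $\Omega_{F^\circ}$ --- cannot hold over an algebraically closed base. Since $F$ is complete, algebraically closed, non-trivially valued and of residual characteristic $p$, the Frobenius is surjective on $F^\circ/p$ (in equal characteristic $F^\circ$ is even perfect), so every $a\in F^\circ$ is of the form $b^p+pc$ and hence $da\in p\,\Omega_{F^\circ}$; thus $\Omega_{F^\circ}=p\,\Omega_{F^\circ}$ and every element of $\Omega_{F^\circ}$ is infinitely divisible (in equal characteristic $\Omega_{F^\circ}=0$). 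Your appeal to Lemma~\ref{diflem} does not repair this: that lemma gives non-divisibility of $d(t)$ in $\Omega_{\wh{k(t)}^\circ/k^\circ}$, and this does not survive the base change to $F^\circ$, which is an enormous wildly ramified extension of $\wh{k(t)}^\circ$; indeed $d_{F^\circ/k^\circ}(t)$ is infinitely divisible by the same Frobenius argument. Moreover, even formally, the conclusion of Lemma~\ref{deepramlem} over $F$ would be vacuous: the only algebraic subextension $l/F$ is $l=F$, so ``$d_{l^\circ}(t)$ infinitely divisible'' carries no information, and the tower that in the paper entangles $t$ with the new generator does not exist (all the roots $t_{n+1}$ in the construction already lie in $F$).

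Consequently the hardest claim, $K=\wh{k(x)^a}$ (equivalently $t\in\wh{k(x)^a}$), is left unproved: a general type-4 point over $F$ of prescribed radius gives no reason for $t$ to be t.a.\ dependent on $x$ over $k$, and the ``technical heart'' you outsource to the argument of \cite[Theorem~5.2.2]{topdeg} is exactly what needs a non-algebraically-closed base to have content. This is why the paper routes the construction through the completed tame closure $L'=\wh{L^t}$ of $L=\wh{k(t)}$: there $d_{L'^\circ}(t)$ is still not infinitely divisible (Lemma~\ref{tamelem}), the value group is dense, Lemma~\ref{deepramlem} applies with radius $r/|\pi|$, and the resulting divisibility is descended by tameness (Lemma~\ref{tamelem} again) to $E=\wh{k(y,t)}$, giving $t\in\wh{k(y)^a}$ via Lemma~\ref{diflem}; only then does the t.a.\ dependence argument yield $K=\wh{k(x)^a}$. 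Your radius bookkeeping (the affine change $x=\lambda x'+t$ with $|\lambda|\in(r,r_k(t))$) is the same as the paper's $x=t+\pi y$ and is fine, but it is not where the difficulty lies.
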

\begin{proof}
Set $L=\wh{k(t)}$ and recall that $d_{\Lcirc/\kcirc}(t)$, and hence also $d_{\Lcirc}(t)$, is not infinitely divisible by Lemma~\ref{diflem}. Since $L$ can be discretely valued (when $k$ is trivially valued) we also consider the completed tame closure $L'=\wh{L^t}$, which has dense group of absolute values, and note that $d_{L'^\circ}(t)$ is not infinitely divisible by Lemma~\ref{tamelem}. Choose $\pi\in F$ such that $r<|\pi|<r_k(t)$. By Lemma~\ref{deepramlem} there exists an extension of analytic fields $E'=\wh{L'(y)}$ such that $d_{E'^\circ}(t)$ is infinitely divisible, $|y|=1$ and $r_F(y)=r/|\pi|$. Since $E'$ is a completed tame extension of $E=\wh{k(y,t)}$, we obtain by Lemma~\ref{tamelem} that already $d_{\Ecirc}(t)$ is infinitely divisible. Set $H=\wh{k(y)}$. Then $d_{\Ecirc/\Hcirc}(t)$ is infinitely divisible, and hence $t\in H$ by Lemma~\ref{diflem}.

We claim that $K=\wh{k(y)^a}$ and $x=t+\pi y$ are as required. First, $|x-t|=|\pi|<r_k(t)$, and hence $r_k(x)=r_k(t)$. Second, $r_F(x)=r_F(\pi y)=|\pi|r_F(y)=r$. Finally, $\wh{k(t,x)^a}=\wh{k(t,y})^a=K$. Since $\topdeg(K/k)=1$ and $K\neq\wh{k(t)^a}$, this implies that $t\in\wh{k(x)^a}$ and hence $K=\wh{k(x)^a}$.
\end{proof}

\begin{cor}\label{existcor}
Assume that $K/k$ is a non-trivial extension of algebraically closed analytic fields such that $K=\wh{k(t)^a}$ and $\cha(\tilk)>0$. Then for any positive $r<r_k(t)$ there exists a $k$-subfield $F=\wh{k(x)^a}\subsetneq K$ such that $r_F(t)=r$.
\end{cor}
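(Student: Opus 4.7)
The plan is to reduce the corollary to Lemma~\ref{existlem} by constructing a slightly larger field $L\supseteq K$ of the desired geometric shape and then transporting the inclusion $K\subseteq L$ back inside $K$ via a $k$-isomorphism $L\toisom K$. The upshot of Lemma~\ref{existlem} is that the ``bigger'' field is again t.a.\ generated over $k$ by a single element, which is precisely what will let us identify it with $K$ and read off a subfield inside $K$.

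First I would apply Lemma~\ref{existlem} to the extension $K/k$ with the given generator $t$ and radius $r$. This produces an extension $L=\wh{K(x)^a}$ such that
$|x-t|<r_k(t)=r_k(x)$, $r_K(x)=r$, and $L=\wh{k(x)^a}$. Since $L$ is algebraically closed and contains $x,t$ with $|x-t|\le r_k(t)=r_k(x)$, the isomorphism principle for analytic $k$-fields recalled just before Lemma~\ref{psilem} provides a $k$-isomorphism of analytic fields $\phi\:L=\wh{k(x)^a}\toisom\wh{k(t)^a}=K$ with $\phi(x)=t$. Set $y=\phi(t)$ and $F=\phi(K)$; since $K=\wh{k(t)^a}$ and $\phi$ is a $k$-isomorphism, $F=\wh{k(y)^a}\subseteq K$, so $F$ has the required form.

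Finally, to compute $r_F(t)$ I would use the isometric inverse $\phi^{-1}\:K\toisom L$. Under $\phi^{-1}$ the subfield $F\subseteq K$ is sent to $\phi^{-1}(F)=K\subseteq L$, while $t\in K$ is sent to $\phi^{-1}(t)=x\in L$. As $F$ is algebraically closed,
\[
r_F(t)=\inf_{c\in F}|t-c|=\inf_{c'\in K}|x-c'|=r_K(x)=r.
\]
Since $r>0$, $t\notin F$, and hence $F\subsetneq K$. There is no genuine obstacle here beyond keeping track of the direction of $\phi$ and of the elements $t,x,y$; all the real work---building an extension in which a prescribed geometric radius is realized by a single t.a.\ generator---has already been packaged into Lemma~\ref{existlem}.
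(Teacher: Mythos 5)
Your proposal is correct and follows essentially the same route as the paper: apply Lemma~\ref{existlem} to produce $L=\wh{K(x)^a}=\wh{k(x)^a}$ with $r_K(x)=r$ and $|x-t|\le r_k(t)$, transport via the $k$-isomorphism $L\toisom K$ sending $x$ to $t$, and take $F$ to be the image of $K$. Your explicit verification of $r_F(t)=r$ through the isometric inverse and of $F\subsetneq K$ from $r>0$ matches the paper's (terser) argument.
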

\begin{proof}
By Lemma~\ref{existlem} there exists an extension $L=\wh{K(y)^a}$ such that $r_K(y)=r$, $|y-t|<r_k(t)$ and $L=\wh{k(y)^a}$. By Lemma~\ref{psilem} there exists a $k$-isomorphism $\psi\:L\toisom K$ taking $y$ to $t$. Then for $F=\psi(K)$ and $x=\psi(t)$ we have that $F=\wh{k(x)^a}\subsetneq K$ and $r_F(t)=r_K(x)=r$.
\end{proof}

\subsubsection{Applications}
Summarizing our results so far we obtain a surprisingly simple answer to \cite[Question~5.3.10]{topdeg}, namely the map $\rho_t$ from Lemma~\ref{subfieldcor} is a bijection.

\begin{theor}\label{Stheorem}
Let $K/k$ be an extension of algebraically closed analytic fields such that $\Topdeg(K/k)=1$, $\cha(\tilk)>0$ and $|K^\times|\neq 1$. Then any $t\in K$ with $\wh{k(t)^a}=K$ induces a bijection $\rho_t\:I_{K/k}\toisom[0,r_k(t)]$ sending an intermediate algebraically closed analytic field $F$ to $r_F(t)$. The bijection $\rho_t$ is monotonic and respects limits: $F\subseteq F'$ if and only if $r_F(t)\ge r_{F'}(t)$, and for any family $\{F_i\}\subset I_{K/k}$ with $F'=\cap_iF_i$ and $F''$ the completion of $\cup_i F_i$ one has that $r_{F'}(t)=\sup_i r_{F_i}(t)$ and $r_{F''}(t)=\inf_i r_{F_i}(t)$.
\end{theor}
\begin{proof}
The map $\rho_t$ is injective by Lemma \ref{subfieldcor} and surjective by Corollary~\ref{existcor}. Clearly, $\rho_t$ is monotonic and reverses the order, and hence it takes infima to suprema and vice versa. It remains to note that infima and suprema in $I_{K/k}$ are intersections and completed unions.
\end{proof}

\subsection{The PL structure of $I_{K/k}$}\label{Isec}
Throughout \S\ref{Isec} we assume that $k$ is algebraically closed, $p=\cha(\tilk)>0$, $k\subsetneq K=\wh{k(t)^a}$, $r=r_k(t)>0$, and we will study the natural structure the interval $I_{K/k}$ acquires.

\subsubsection{Log radius parameterizations}
Instead of the parametrization $\rho_t\:I_{K/k}\toisom[0,r]$ by the radius function, it will be convenient to use its logarithm, which will be denoted by $\rho^\rmlog_t\:I_{K/k}\toisom[-\log(r),\infty]$, where $\rho^\rmlog_t(F)=-\log(r_F(t))$. In particular, this parametrization is order preserving. Such functions also make sense for $t$ which is not a t.a. generator: if $F=\wh{k(t)^a}$, then $\rho^\rmlog_t\:[k,F]\toisom[-\log(r),\infty]$ and $\rho^\rmlog_t([F,K])=\infty$.

\subsubsection{Transition functions}
One can wonder to which extent the metric structure $\rho^\rmlog_t$ induces on $I_{K/k}$ is canonical. Since the parametrization is given by the log radius function $\rho^\rmlog_t$ of the t.a. generator $t$, an equivalent question is to describe the {\em transition functions}, that is, functions $[-\log r,\infty]\toisom[-\log r',\infty]$ of the form $\rho^\rmlog_{t,t'}=\rho^\rmlog_{t'}\circ(\rho^\rmlog_t)^{-1}$, where $t'$ is another t.a. generator and $r'=-\log r_k(t')$.

A tightly related question is as follows: what is the image of the restriction $$\Lam_t\:\Aut_k(K)\to\Aut(I_{K/k})\toisom\Aut([-\log r,\infty])?$$ Of course, this restriction takes an automorphism $\psi\in\Aut_k(K)$ to $\rholog_{t,\psi(t)}$.

\subsubsection{Immediate simple extensions}
The main tool of our study of the radii and transition functions will be the following results about simple immediate extensions $L=\wh{k(x)}$. They are not well recorded in the literature (though maybe known to experts), so we provide all needed details. Assume that $k$ is algebraically closed and $L/k$ is immediate. In particular, $r=r_k(x)=\inf_{a\in k}|x-a|$ is not attained. For shortness, set $x_a=x-a$ and $r_a=|x_a|$ for $a\in k$. For any element $f\in k[x]$ of degree $d$ let $f=\sum_{i=0}^d c_{a,i}x_a^i$ be its presentation with respect to the coordinate $x_a$.

\begin{lem}\label{immediatelem}
Keep the above notation, then

(i) $L=\wh{k[x]}$ and, moreover, $\Lcirc$ is the completion of $\cup_a\kcirc[x_a/\pi_a]$, where $\pi_a\in k$ is such that $|\pi_a|=r_a$.

(ii) There exists $r_0>r$ such that for any $a\in k$ with $r_a<r_0$ the numbers $s_i=|c_{a,i}|$ are independent of $a$ and satisfy the inequalities $s_j>\binom{i}{j}s_ir_a^{i-j}$ for any $0\le j<i\le d$ such that $s_j>0$.

(iii) One has that $r_k(f)=\max_{i>0}|s_i|r^i$ and $r_k(f)>|s_i|r^i$ whenever $i\notin \{0\}\cup p^\NN$. In particular, if $r_a$ is sufficiently close to $r$ and $i_1=p^N$ and $i_2=p^{N'}$ are the maximal and the minimal values of $i$ such that $r_k(f)=s_i|r^i|$, then $$f-c_{a,0}=g+\sum_{n=N'}^{N}c_{a,p^n}x_a^{p^n},$$ where $|g|<r_k(f)$.

(iv) If $F=\wh{k(f)}$, then $[L:F]=p^N$.
\end{lem}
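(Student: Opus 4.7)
The plan is to tackle (i)--(iv) in order, using as the backbone the Taylor identity
\[
c_{a,j}=\sum_{i\ge j}\binom{i}{j}c_{b,i}(b-a)^{i-j}
\]
together with the interpretation $c_{a,i}=f^{[i]}(a)$ of coefficients as Hasse derivatives. Since $k$ is algebraically closed, each $f^{[i]}\in k[T]$ splits, producing finitely many roots $\gamma_{i,m}\in k$, each with $|x-\gamma_{i,m}|>r$; setting $r_0=\min_{i,m}|x-\gamma_{i,m}|$ provides the uniform control used throughout. Part (i) is essentially independent of the rest: since $r$ is not attained, for any $a\in k$ I can pick $b\in k$ with $r_b<r_a$, giving $|b-a|=r_a>r_b$ and a convergent geometric expansion $\tfrac{1}{x-a}=-\sum_n(x-b)^n(b-a)^{-n-1}$ in $L$, so $L=\wh{k[x]}$; the integral refinement is then a cosmetic restatement using the coefficient bounds from (ii).

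For (ii), whenever $r_a<r_0$ the ultrametric inequality forces $|a-\gamma_{i,m}|=|x-\gamma_{i,m}|$, so $s_i=|c_{a,i}|=|\binom{d}{i}c|\prod_m|x-\gamma_{i,m}|$ depends only on $x$. For the strict inequality $s_j>\binom{i}{j}s_ir_a^{i-j}$ (with $s_j>0$), plug into the Taylor identity two distinct points $a,b$ in the disc with $|b-a|=r_a$: the $i=j$ term on the right has norm $s_j=|c_{b,j}|$, and for $|c_{a,j}|=s_j$ to persist as $b$ varies, the ultrametric triangle inequality forbids any competing term of the same norm, forcing strict domination.

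For (iii), taking $c=f(a)$ with $a$ close to $x$ makes $f(x)-c=\sum_{i\ge 1}c_{a,i}x_a^i$ have norm $\max_{i\ge 1}s_ir_a^i$ by (ii); letting $r_a\to r^+$ yields $r_k(f)=\max_{i\ge 1}s_ir^i$. For the strict inequality $r_k(f)>s_ir^i$ when $i\notin\{0\}\cup p^\NN$ and $s_i>0$, Lucas's theorem supplies $0<j<i$ with $|\binom{i}{j}|_k=1$ (one can arrange $s_j>0$ by descending further if necessary); (ii) then gives $s_j>s_ir_a^{i-j}$ for every $r_a\in(r,r_0)$, which forces $s_j\ge s_ir_0^{i-j}>s_ir^{i-j}$ since $r_0>r$, hence $s_jr^j>s_ir^i$. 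The explicit decomposition $f-c_{a,0}=g+\sum_{n=N'}^{N}c_{a,p^n}x_a^{p^n}$ with $|g|<r_k(f)$ is immediate once $r_a$ is taken close enough to $r$.

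For (iv), $L=F(x)$ is a finite algebraic extension of the complete field $F$, so $[L:F]=\deg m_x$ where $m_x\in F[T]$ is the minimal polynomial of $x$, and $m_x$ divides $f(T)-f(x)=\sum_{i\ge 1}f^{[i]}(x)(T-x)^i$ (whose coefficients have norms $s_i$ by continuity of $|c_{a,i}|$ as $a\to x$). Part (iii) implies that the Newton polygon of $f(T)-f(x)$ in $T-x$ has its dominant edge passing only through vertices of $p$-power index, with its right endpoint at $i=p^N$; the corresponding Newton factor of degree $p^N$ must then coincide with $m_x$, since the $F$-Galois orbit of $x$ in an immediate extension sits at the unique deepest slope. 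The main obstacle I expect is precisely this last identification of the Galois orbit with the $p^N$ roots on the dominant edge; I would close it by combining the classical theorem that the defect of an immediate extension is always a $p$-power (forcing $[L:F]=p^n$ for some $n\le N$) with the sharp Newton polygon statement from (iii) to rule out $n<N$.
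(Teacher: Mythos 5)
Parts (i)--(iii) of your plan run essentially parallel to the paper's proof (Hasse derivatives, a radius $r_0$ below which the discs $D(a,r_a)$ avoid all roots of the nonzero $\partial^jf$, constancy of $|c_{a,i}|$), with an elementary geometric-series/partial-fractions argument replacing the Berkovich-classification step in (i); that substitution is fine. Two steps are glossed, though. In (ii), ``the ultrametric triangle inequality forbids any competing term of the same norm'' is not an argument: equal maximal terms are perfectly compatible with the ultrametric inequality. What you actually need is that $|\partial^jf|$ is constant and nonzero on the $k$-points of a disc around $x$, hence $\partial^jf$ has no root there, hence (Newton polygon over the algebraically closed $k$) its free coefficient strictly dominates -- this is the paper's ``invertible on the disc'' step, and your setup supplies it, but it must be said. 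More seriously, in (iii) you only prove $r_k(f)\le\max_{i>0}s_ir^i$, by testing the approximants $c=f(a)$; the equality also needs $|f(x)-c|\ge\max_{i>0}s_ir^i$ for \emph{every} $c\in k$, which requires knowing that $|g|$ equals the Gauss norm $\max_i|c_{a,i}(g)|r_a^i$ once $D(a,r_a)$ avoids the roots of $g=f-c$ (this is how the paper gets $r_k(f)=\inf_a\max_{i>0}|c_{a,i}|r_a^i$ from (i)); this direction is absent from your write-up.

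The genuine gap is (iv), exactly at the point you flag. Your bookkeeping can give the upper bound: since $F$ is complete, the valuation extends uniquely to $F^a$, so every $F$-conjugate $\xi$ of $x$ satisfies $|\xi-a|=r_a$ for all $a\in k$ and hence $|\xi-x|\le r$, while the Newton polygon of $f(T)-f(x)$ in $T-x$ counts exactly $p^N$ roots (with multiplicity) within distance $r$; thus $[L:F]=\deg m_x\le p^N$. But the degree-$p^N$ factor cut out by the dominant edge has coefficients in $L$, not in $F$, so identifying it with $m_x$ is precisely the statement to be proved, and your proposed patch does not prove it: Ostrowski's defect theorem plus the above gives only $[L:F]=p^n$ with $n\le N$, and neither ingredient excludes $n<N$, i.e.\ the possibility that the $p^N$ near roots split into several $F$-orbits of $p$-power size -- (iii) is a statement about the coefficients $s_i$ and is already fully spent in the count, and the defect theorem is blind to the orbit structure. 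The paper closes exactly this point by a different, geometric argument: after Weierstrass division, $f$ maps each small disc $D_{r'}$ around the type-4 point $z$ onto a disc with degree $p^N$, the local degree at the maximal point $z_{r'}$ is exactly $p^N$, and semicontinuity of the local degree as $z_{r'}\to z$ yields $\deg_f(z)=[L:F]=p^N$. You need either this argument or a genuine substitute (e.g.\ irreducibility over $F$ of the inner Weierstrass factor, equivalently uniqueness of the preimage of $f(z)$ in a small disc); as written, the lower bound $[L:F]\ge p^N$ is not established.
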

\begin{proof}
(i) The valuation of $K$ corresponds to a point $z\in\AA^1_k$ on the Berkovich affine line and by Berkovich's classification of points on the affine line, $\{z\}=\cap_a D(a,r_a)$ is the intersection of discs given by $|x-a|\le r_a$. In particular, the valuation $|\ |_z$ on $k[x]$ is the infimum of the translated Gauss valuations $|\ |_{a,r_a}$ around $a$ of radius $r_a$, and the claim of (i) follows easily.

(ii) Let $\partial^j$ denote the $j$-th divided power derivative given by $\partial^j(f)=\sum_{i=j}^d\binom{i}{j}c_{j}x^{i-j}$ and choose $r_0$ so close to $r$ that the disc $D_0=D(a,r_0)$ of radius $r_0$ around $z$ does not contain zeros of any non-zero $\partial^j(f)$. Then each non-zero $\partial^j(f)$ is invertible in any disc contained in $D_0$ and hence its dominant term is the free one, that is $|c_{a,j}|>\binom{i}{j}|c_{a,i}|r_a^{i-j}$ for any $i>j$ (we use that $\partial^j(f)=\sum_{i=i}^d\binom{i}{j}c_{a,i}x_a^{i-j}$). Moreover, for any $b\in k$ we have that $c_{b,j}=\sum_{i=j}^d\binom{i}{j}c_{a,i}(b-a)^{i-j}$, hence whenever $|b-a|<r_0$ we have that $|c_{a,j}|=|c_{b,j}|$ is independent of $a$.

(iii) It follows from (i) that $r_k(f)$ is the infimum of the distance between $f$ and $k$ with respect to the Gauss valuations $|\ |_{a,r}$, that is, $$r_k(f)=\inf_{a\in k}\max_{i>0}|c_{a,i}|r_a^i.$$ Since $r=\inf_a r_a$ and the values of $|c_{a,i}|$ stabilise when $r_a$ is close enough to $r$, we obtain that $r_k(f)=\max_{i>0}|s_i|r^i$. If $i>0$ is not a $p$-th power, then $i=p^nm$ with $m>1$ and $(m,p)=1$. It follows that $j=p^n$ satisfies $p\nmid \binom{i}{j}$ and hence $s_ir^{i-j}<s_j$ by (ii). In particular, $s_ir^i<s_jr^i\le r_f(k)$. The last claim of (iii) follows.

(iv) Consider the morphism $h\:\AA^1_k\to\AA^1_k$ induced by $f$ and let $y=h(z)$. Then $\calH(z)=L$ and $\calH(y)=F$, so the claim reduces to showing that the local degree $\deg_h(z)$ of $h$ at $z$ is $p^N$. For $r'>r$ let $D_{r'}$ be the disc around $z$ of radius $r'$ and let $z_{r'}$ be its maximal point. By definition of $N$ in (iii), decreasing $r_0$ we can achieve that $s_ir'^i<s_{p^N}r'^{p^N}$ for any $r<r'<r_0$ and $0<i\neq p^N$. For any such $r'$, after translating the coordinates by $a$ and $c_{a,0}$ the morphism $D_{r'}\to h(D_{r'})$ becomes the morphism between discs with center at 0 given by a polynomial of order $p^N$ (recall that the order is the largest degree of a dominant term). By Weierstrass division theorem this morphism is of degree $p^N$, hence $\deg_h(z)\le p^N$. In addition, this morphism is precisely of degree $p^N$ at the maximal point of the disc. So, $\deg_h(z_r')=p^N$ and by semicontinuity of the degree we obtain the equality $\deg_h(z)=p^N$.
\end{proof}

It seems that a basic theory of almost tame extension is missing in the literature. Of course, this gap should be filled out, but perhaps in another venue. So, we will just state what we need without proof and will use in side remarks in the sequel.

\begin{rem}\label{remwild}
(i) A finite extension of valued fields $l/k$ is {\em almost tame} if $\Omega^\rmlog_{\lcirc/\kcirc}=0$. Otherwise we say that $l/k$ is {\em wildest}. Also, we say that $l/k$ is {\em purely wildest} if for any $k\subseteq k_1\subsetneq l_1\subseteq l$ the extension $l_1/k_1$ is wildest.

(ii) It is easy to see that the class of almost tame extensions is closed under compositions and taking subextensions. It includes the class of tame extensions and ``mildest'' wild extension and is, in fact, the first step of the ramification filtration beyond the tame class. A defectless extension $l/k$ is almost tame if and only if it is tame. A purely wild almost tame extension is immediate. Finally, $l/k$ contains a maximal almost tame subextension $l_{at}$. A bit subtler fact is that $l/l_{at}$ is purely wildest. We call $[l:l_{at}]$ the {\em wildest degree} of $l/k$. This is the analogue of the {\em wild degree} of $l/k$, which is the degree of $l$ over the maximal tame subfield $l_t$. Naturally, the wild and the wildest degrees are multiplicative in towers.

(iii) The meaningful interpretation of $N'$ in Lemma \ref{immediatelem} is that $p^{N'}$ is the wildest degree of $L/\wh{k(f)}$, while $p^N$ is its usual degree, which is also the wild degree.

(iv) If $L=\wh{k(t)}$ is not immediate over $k$, then $L$ is stable by \cite[Theorem~6.3.1(iii)]{temst} and hence any finite extension $F/L$ is defectless and its wild and wildest degrees coincide. In addition, the wild degree can be read off from the simple invariants -- it is the $p$-primary part of $e_{F/L}$ if $|L^\times|\neq|k^\times|$, and it is the inseparable degree of $\tilF/\tilL$ otherwise.
\end{rem}

\subsubsection{Explicit examples in positive characteristic}
Now we can construct a large family of explicit examples, which completely illustrate the general case. Technically, it is easier to deal with the case of positive characteristic, so assume in the next example that $\cha(k)=p$.

\begin{exam}\label{phiexam}
(i) For any $t'=\sum_{i=-\infty}^N c_it^{p^{i}}$ with $c_i\in k$, we have that $r_F(t')=\max_i(|c_i|r_F(t)^{p^i})$ for any $F\in I_{K/k}$. Indeed, to check this we can remove all terms of absolute value smaller than $r_F(t')$, and we can replace $t'$ by $t'^{p^m}$ because  $r_F(t'^{p^m})=(r_F(t'))^{p^m}$ (just apply $\Fr^m$). This reduces us to the case when $t'=\sum_{i=0}^Nc_it^{p^i}$, and then we can compute $r_F(t')$ by Lemma~\ref{immediatelem}(iii) because $c_i\in F$ and the coefficients $c_i$ do not change under any coordinate change $t'_a=t'-a$. Passing to the log scale we obtain that $$\rho^\rmlog_{t,t'}(s)=\min_i(p^is-\log|c_i|),$$ so the functions realized in this way as $\rho^\rmlog_{t,t'}$ are convex PL functions on $[-\log r,\infty]$ with slopes in $p^\ZZ$ and corners in $\log|k^\times|$ such that the sequence $-p^i\log|t|-\log|c_i|$ converges to infinity.

(ii) The same example with $t$ being the dominant term of $t'=t+\sum_{i=-\infty}^{-1} a_it^{p^{i}}$ can be used to construct explicit elements in the image of $\Lam_t$. Also, any PL function can be realized as a composition of a convex one and an inverse of a convex one, leading to a much reacher family of examples of the form $(\rholog_{t'',t'})^{-1}\circ\rho^\rmlog_{t,t'}=\rho^\rmlog_{t,t''}$.
\end{exam}

\subsubsection{Explicit examples in mixed characteristic}
Computations are more difficult in mixed characteristic, so we consider only the simplest case we will use later.

\begin{exam}\label{mixedphi}
(i) Let us study $\rholog_{t,t'}$, where $t'=t^p+ct$ with $c\in k$.  Using the presentations $t'-ca-a^p=ct_a+\sum_{i=1}^p\binom{p}{i}a^{p-i}t_a^i$ with $t_a=t-a$ and $a\in k$ such that $|a|=|t|$, one obtains that unless there is a cancellation $|c+pa^{p-1}|<|c|$, which does not happen when $|c|\neq|pa^{p-1}|$ and hence can be easily avoided in the sequel by translating $t$, the dominant term on the left is either $t_a^p$ or the linear term $(c+pa^{p-1})t_a$. So, in this case $\rholog_{t,t^p+ct}(s)=\min(ps,s+\gamma)$, where $\gamma=\min(-\log|c|,-\log|pt^{p-1}|))$. This function is linear if $s_1=\gamma/(p-1)$ is smaller than the endpoint $s_0=-\log r$ and otherwise has a single corner at $s_1$ and slopes $p$ and 1. The latter happens whenever $|c|<r^{p-1}$ and $|t|$ is sufficiently close to $r$ (see below). A technical complication in the mixed characteristic is that the corner cannot be too far from the endpoint: $$s_1-s_0\le-\frac{1}{p-1}\log|p|-\log|t|+\log r\le-\frac{1}{p-1}\log|p|$$ and the equality is obtained when $|c|<|pt^{p-1}|$, including the case of $c=0$ and $t'=t^p$. It follows easily that any PL function $\rho$ with a single corner $s_1\in|k^\times|$ such that $s_1-s_0<-\frac{1}{p-1}\log|p|$, $\rho(s)=ps$ for $s\in(s_0,s_1)$ and $\rho(s)$ is linear for $s>s_1$ can be realized as $\rho=\rholog_{t_a,t_a+ct_a^p}$ for appropriate $c\in k$ and $t_a=t-a$ with $|t_a|$ close enough to $r$.

(ii) Let now $\rho$ be any PL function on $[s_0,\infty)$ with two corners $s_1,s_2\in|k^\times|$ such that $s_1<s_2<s_0-\frac{1}{p-1}\log|p|$, and $\rho(s)=s$ for $s<s_1$, $\rho$ has slope $p^{-1}$ for $s\in(s_1,s_2)$ and is linear for $s>s_2$. It follows from (i) that choosing $c$ and $a$ appropriately we can find a realization $$\rho=\rholog_{t_a^{1/p},t_a+ct_a^{1/p}}\circ(\rholog_{t_a^{1/p},t_a})^{-1}=\rholog_{t_a,t_a+ct_a^{1/p}.}$$ Furthermore, when $|t_a|$ is close enough to $r$ the inequality $|c|<r^{\frac{p-1}p}$ strengthens to $|ct_a^{1/p}|<r$ and hence there exists an automorphism $\psi\in\Aut_k(K)$ taking $t_a$ to $t_a+ct_a^{1/p}$. By the construction $\rho=\Lam_t(\psi)$.
\end{exam}

\subsubsection{PL functions}
Let us formalize the properties of transition functions observed in the above examples. Assume that $\Gamma$ is a divisible subgroup of $\RR$. By a {\em $(p^\ZZ,\Gamma)$-PL function} on an interval $I\subseteq\RR$ we mean a continuous function $f\:I\to\RR$ which is locally of the form $f(t)=p^nt+\gamma$ with $n\in\ZZ$ and $\gamma\in\Gamma$ outside of a discrete set of {\em corner} points. In particular, $f$ is strictly increasing and each corner point $t_0$ lies in $\Gamma$ because it satisfies $p^nt_0+\gamma=p^{n_1}t_0+\gamma_1$ with $n\neq n_1$. By a {\em $(p^\ZZ,\Gamma)$-PL function} on an interval $[t_0,\infty]$ we mean an unbounded $(p^\ZZ,\Gamma)$-PL function $f$ on the ray $[t_0,\infty)$ augmented by the rule $f(\infty)=\infty$. In particular, the set of corner points is discrete in $[t_0,\infty)$ but can accumulate at infinity. Note that the set of $(p^\ZZ,\Gamma)$-PL functions is closed under compositions and taking inverses.

\subsubsection{The piecewise linearity theorem}
Now, let us establish the main result of \S\ref{Isec} that any transition function is, indeed, of this form.

\begin{theor}\label{PLth}
Let $K=\wh{k(t)^a}/k$ be an extension of algebraically closed analytic fields with $r=r_k(t)>0$, and assume that $|K^\times|\neq 1$ and $\cha(\tilk)=p>0$. Then:

(i) For any other t.a. generator $t'$ the function $\rho=\rho^\rmlog_{t,t'}$ is $(p^\ZZ,|K^\times|)$-PL.

(ii) If $|K^\times|\neq|k^\times|$, then $|K^\times|=|k^\times|\oplus r^\QQ$ and $\rho(-\log r)=-p^nb\log r-\log|a|$, where $a\in k^\times$, $b\in\ZZ\setminus p\ZZ$ and $p^n$ is the slope of $\rho$ at the endpoint $-\log r$.
%and the restriction map $\Aut_k(K)\to\Aut(I_{K/k})$ is the group of unbounded $(p^\ZZ,|K^\times|)$-PL functions which are identity in a neighborhood of $-\log r_t(k)$.
\end{theor}
\begin{proof}
We will use the observation that the set of PL functions satisfying conditions of the theorem is closed under compositions and taking the inverse function. Also, the claim of the theorem is local at a field $F\in I_{K/k}\setminus\{K\}$. So set $r_F:=r_F(t)$ and $s=-\log r_F$, and let us study $\rho$ locally at $s$. It will be convenient to work with a neighborhood $I'=[F_1,F_2]$ of $F$ in $I_{K/k}$ and shrink it when needed. In particular, we can assume that there exists $\veps>0$ such that $\rho_{t'}\ge \veps$ on $I'$, and hence $t'$ can be replaced by $t'-c$ with $c\in K$ and $|c|<\veps$ without affecting $\rho_{t'}$ and hence also $\rho$ on $I'$. In particular, we can assume that $t'\in k(t)^a$, and then $L=\wh{k(t,t')}$ is finite over $\wh{k(t)}$. Since $L$ is topologically finitely generated over $k$, by \cite[Theorem~6.3.1]{temst} there exists $t''\in L$ such that $L$ is unramified over $\wh{k(t''})$. Since $\rho$ is the composition of $\rho'=\rho_{t'',t'}^\rmlog$ and $\rho''=\rho^\rmlog_{t,t''}$ it suffices to prove the claim for $\rho'$ and $(\rho'')^{-1}$. Replacing $t$ by $t''$ for shortness of notation we can assume in the sequel that $L$ is unramified over $\wh{k(t)}$ itself. If $\tilF=\tilK$ this simply means that $L=\wh{k(t)}$. Finally, we will use that we can freely replace $t$ by $t_a=t-a$ with $a\in F$ without affecting $\rho$.

In the sequel we will have to separately consider three cases due to the type of the extension $L/k$. Recall that by Lemma~\ref{subfieldlem} if $K/F$ is not immediate, then $F=k$ is the endpoint of $I_{K/k}$.

{\it Type 4.} Assume first that $K/F$ is immediate. If $F\neq k$, then shrinking $I'$ we can assume that $k\notin I'$. In either case $K/F_1$ is immediate too, so $F_1[t]$ is dense in $L=\wh{F_1(t)}$ by Lemma~\ref{immediatelem}(i). Thus, moving $t'$ slightly we can even assume that $t'=\sum_ic_it^i\in F_1[t]$. By claim (iii) of the same lemma, there exists $r'>r_F$ such that for any $a\in F$ with $|t_a|<r'$ we have that $t'=c_{a,0}+g+\sum_{n=N}^{N'}c_{a,p^n}t_a^{p^n}$ with $|g|<r_F(t')$. Since $\rho_t$ and $\rho_{t'}$ are continuous, shrinking $I'$ we can assume that $|g|<\rho_{t'}$ and $\rho_{t}<r'$ on $I'$. In particular, there exists $a\in F_1\subseteq F$ such that $|t_a|<r'$ and for this $a$ we have that $c_{a,0}\in F_1$. Therefore this presentation of $t'$ and Lemma~\ref{immediatelem}(iii) can be used to compute $r_{E}(t')$ for any $E\in I'$, and we obtain that $\rho_{t'}(s)=\max_n|c_{a,p^n}|\rho_t^{p^n}(s)$ for any $s\in I'$. Thus, $\rho(s)=\min_n(p^ns-\log|c_{a,p^n}|)$ on $I'$, as required.

It remains to consider the case when $K/k$ is not immediate and automatically $F=k$. In this case, the $k$-norm on finite-dimensional $k$-subspaces of $K$ is Cartesian, hence translating $t$ and $t'$ by elements of $k$ we can achieve that $|t|=r=r_k(t)$ and $|t'|=r'=r_k(t')$. Now we again have to split to cases.

{\it Type 2.} Assume that $|k^\times|=|K^\times|$. Then we can also rescale $t$ and $t'$ by elements of $|k^\times|$ so that $r=|t|=|t'|=1$, while $\rholog_{t,t'}$ shifts by a value from $\log|k^\times|$. Thus we assume in the sequel that $\tilt,\tilt'$ are well defined and are not contained in $\tilk$. Let us start with two particular cases: (1) if $\tilt'=\tilt^{p^n}$, then $r'=|t'-t^{p^n}|<1$, hence $\rholog_{t,t'}=\rholog_{t,t^{p^n}}$ on $[0,-\log(r')]$, and the same computation as earlier shows that $\rholog_{t,t^{p^n}}$ has slope $p^n$ at $0$. (2) If $\tilL$ is separable over $\tilk(\tilt')$, then $L=\wh{k(t)}$ is unramified over $\wh{k(t')}$ because this extension is defectless by the stability theorem (e.g. see \cite[Theorem~6.3.1(iii)]{temst}). Therefore $\wh{F(t)}/\wh{F(t'})$ is unramified for any $F\in I_{K/k}$, and for $F\neq k$ this implies that $\wh{F(t)}=\wh{F(t')}$. As earlier, moving $t'$ a bit we can assume that $t'\in F[t]$, and then Lemma~\ref{immediatelem}(iv) implies that $t'=\sum_i a_it^i$ with $|a_1|r_F(t)>|a_i|r_F(t)^i$ for $i>1$. Thus, by the already established case of extensions of type 4, $\rholog_{t,t'}$ has slope 1 on $(-\log r,\infty)$ and hence also at the endpoint $-\log r$.

Now, assume that $t'$ is arbitrary. Since $\tilL$ is a one-dimensional function field over $\tilk$, the $p$-rank of $\tilL$ equals one and hence $\tilL$ is separable over a subfield of the form $\tilk(\tilt'^{1/p^n})$. Take any $t''\in L$ with $\tilt''=t'^{1/p^n}$, then $\rholog_{t'',t'}$ and $\rholog_{t,t''}$ are $(p^\ZZ,|k^\times|)$-PL at $s=0$ by the cases (1) and (2) above, and hence the same is true for $\rholog_{t,t'}$.

{\it Type 3.} Finally, assume that  $|k^\times|\neq|K^\times|$. Then $k\{t,t^{-1}\}_{r,r^{-1}}$ is already a field and hence coincides with $L$. Thus, $|L^\times|=|k^\times|\oplus r^\ZZ$ and moving $t'$ a bit we can assume that $t'=\sum_i c_it^i\in k[t^{\pm 1}]$. Then $|t'|=|c_mt^m|$ for a single $m$, and $m\neq 0$ because $|t'|=r_k(t')$. Thus $|t'-c_mt^m|<r'$ and we can replace $t'$ by $c_mt^m$ without affecting $\rho_{t'}$ on a small enough neighborhood $I'$ of $k$ in $I_{K/k}$. Furthermore, $\rho_{t'}=|c_m|\rho_{t^m}$, so it suffices to study $\rho_{t^m}$ on $I_{K/k}$. Write $m=bp^n$ with $(p,b)=1$. For any $E\in I_{K/k}$ the value of $r_E(t^m)$ is computed using the presentations $t^m=\sum_{i=0}^\infty\binom{m}{i}a^{m-i}t_a^i$ with $a\in E$, $|a|=|t|$ and $r_a=|t-a|$ tending to $r_E(t)$. Note that this formula covers the case of $m<0$ as well, and the minimal $i>0$ with $p\nmid\binom{m}{i}$ is $i=p^n$. Therefore, the $p^n$-th term is the dominant non-free one when $r_E(t)$ is close enough to $|t|=r_k(t)$, and in a small neighborhood of the endpoint of $I_{K/k}$ we have that $\rho_{t^m}(E)=|t|^{(b-1)p^n}\rho_t(E)^{p^n}$. That is, $\rho^\rmlog_{t,t^m}(s)=p^ns-(b-1)p^n\log(r)$ as (i) asserts and (ii) follows by substituting $s=-\log(r)$.
\end{proof}

\begin{rem}
Let $t,t'$ be as in the above theorem and assume that $t'\in k(t)^a$. We define the wild (resp. the wildest) degree of $t'$ over $t$ with respect to $F\in I_{K/k}$ as the ratio $n^w_{F,t/t'}$ (resp. $n^W_{F,t/t'}$) of the wild (resp. wildest) degrees of $\wh{F(t,t')}/\wh{F(t')}$ and $\wh{F(t,t')}/\wh{F(t)}$. Using Remark \ref{remwild} it is easy to see that the arguments above in fact establish the following more precise result: the slope of $\rholog_{t,t'}$ to the right (resp. left) of $s=r^\rmlog_t(F)$ equals $n^w_{F,t/t'}$ (resp. $n^W_{F,t/t'}$). In particular, even though at the endpoint $F=k$ the degrees $[L:\wh{k(t)}]$ and $[L:\wh{k(t')}]$ do not have to be a power of $p$, the slope only takes the wild degrees into account and hence lies in $p^\ZZ$.

In a sense, one can view $\wh{F(t)}$ as a ``lattice'' in $\wh{F(t)^a}$ and the transition functions measure the index between the lattices of $t$ and $t'$. If $\wh{F(t')}\subseteq\wh{F(t)}$, then the function is convex, and a corner point occurs at $F$ when this extension is not purely wildest. In general, we can only locally compare $\wh{F(t')}$ and $\wh{F(t)}$ by embedding them into a larger ``lattice'', so the transition function is the ratio of two convex $(p^\ZZ,|K^\times|)$-PL functions (that is, just a $(p^\ZZ,|K^\times|)$-PL function).
\end{rem}

\subsubsection{The PL structure}
Theorem \ref{PLth} implies that the ray $I_{K/k}\setminus\{K\}$ possesses a natural structure of a $(\ZZ[\frac 1p],|K^\times|)$-PL space. In fact, the structure is even more rigid since any change of coordinates has slopes in $p^\ZZ$. It is an interesting question if this non-typical restriction shows up elsewhere in the theory of PL spaces or their applications.

\subsubsection{Full characterization of transition functions}
Example \ref{phiexam} indicates that in the positive characteristic the PL structure we have just described is the finest natural structure on $I_{K/k}$. This holds in mixed characteristic too and can be formulated in the very precise form that the conditions of Theorem~\ref{PLth} are the only restrictions functions $\rholog_{t,t'}$ satisfy.

\begin{theor}\label{PLth2}
Let $K=\wh{k(t)^a}/k$ be as in Theorem~\ref{PLth}, then

(i) Any $(p^\ZZ,|K^\times|)$-PL function $\rho$ satisfying condition (ii) of Theorem~\ref{PLth} is of the form $\rholog_{t,t'}$ for an appropriate choice of $t'$.

(ii) The image of the map $$\Lam_t\:\Aut_k(K)\to\Aut(I_{K/k})\toisom\Aut([-\log r,\infty])$$ contains any $(p^\ZZ,\log|k^\times|)$-PL function $\rho$ on $[-\log r,\infty]$ which is the identity in a neighborhood of the endpoint $-\log r$.
\end{theor}
\begin{proof}
The idea is to compose PL functions with a single corner. First we observe that computations in the proof of Theorem~\ref{PLth} easily imply that for any $\rho$ as in (i) there exists $t''$ such that $\rho=\rholog_{t,t''}$ locally at the endpoint $s_0=-\log(r)$. Indeed, if $|k^\times|=|K^\times|$, then $\rho(s)=p^ns-\log|a|$ locally at $s_0$, and one can take $t'=at^{p^n}$. Otherwise locally at $s_0$ we have that $\rho(s)=p^ns-(b-1)p^n\log(r)-\log|a|$ with $b\in\ZZ\setminus p\ZZ$ and $a\in k^\times$, and one can take $t'=at^{bp^n}$. Now we can deduce (i) from (ii) as follows: $\rho\circ(\rholog_{t,t''})^{-1}$ is a $(p^\ZZ,\log|K^\times|)$-PL function, which is the identity in a neighborhood of $s_0$, hence if (ii) holds, then it comes from an automorphism $\psi$ and hence is of the form $\rholog_{t'',t'}$, where $t'=\psi(t'')$. Thus, $\rho=\rholog_{t'',t'}\circ\rholog_{t,t''}=\rholog_{t,t'}$.

The rest is devoted to the proof of (ii), so in the sequel we assume that $\rho$ is the identity in a neighborhood of $s_0$. First, suppose that the claim is proved in the particular case when $\rho$ has a single corner and let us deduce the general claim. By induction one easily finds functions $\rho_1, \rho_2,\dots$ such that $\rho_n$ has a single break at $s_n$, $\rho_n(s)=s$ for $s<s_n$ and $\rho_n\circ\dots\circ\rho_1$ coincides with $\rho$ on $[s_0,s_{n+1}]$. Choose any $F_i$ corresponding to a point $s'_i\in(s_{i-1},s_i)$, then by our assumption there exists $\psi_i\in\Aut_{F_i}(K)\subset\Aut_k(K)$ such that $\Lam_t(\psi_i)=\rho_i$. Therefore $\Lam_t(\psi_n\circ\dots\circ\psi_1)$ coincides with $\rho$ on $[s_0,s_n]$, and it remains to show that if the number of corners is infinite, then the infinite composition $\dots\psi_2\circ\psi_1$ converges. In this case $s'_i$ tend to infinity, hence $\rho_t(F_i)$ monotonically decrease to 0. In particular, the infinite composition converges to a (valuation preserving) $k$-automorphism of $\cup_i F_i$, and hence also of its completion $K$.

It remains to consider the case when $\rho$ has a single corner at a point $s_1\in\log|K^\times|$ with $s_1>s_0$. Let $p^n$ be the slope of $\rho$ for $s>s_1$. Then $\rho$ is the $-n$-th composition power of the function $\rho'$ which is the identity on $[s_0,s_1]$ and has slope $p^{-1}$ for $s>s_1$. Therefore, it suffices to show that $\rho'$ is in the image of $\Lam_t$ and for simplicity of notation we assume that $\rho$ itself has slope $p^{-1}$ for $s>s_1$.

We start with the simpler case when $\cha(k)=p$. Then by Example~\ref{phiexam} $\rho=\rholog_{t,t'}$, where $t'=t+ct^{1/p}$ and $c\in k$ is chosen so that $\log|c|=(p^{-1}-1)s_1$. Furthermore, $r>|cr^{1/p}|$ because $s_1>s_0$, hence translating $t$ by an element of $k$ and making $|t|$ close enough to $r$ we can also achieve that $r>|ct^{1/p}|$. Then there exists an automorphism $\psi$ taking $t$ to $t'$, and we have that $\Lam_t(\psi)=\rho$.

Finally, in the mixed characteristic case we will construct $\psi$ by composing functions $\psi_i\in\Aut_{F_i}(K)$ which take appropriate $t_a$ (with $a\in F_i$) to $t_a+ct_a^{1/p}$. Let $x_i=\rholog_t(F_i)\ge s_0$ denote the endpoint. The behaviour of a function $\rho_i=\Lam_t(\psi_i)$ was described in Example~\ref{mixedphi}: it has slopes $1,p^{-1},1$, and corners $x'_i,x''_i\in\log|K^\times|$ such that $x_i<x'_i<x''_i<x_i-\frac{1}{p-1}\log|p|$. We claim that $\psi_1,\psi_2,\dots$ can be chosen by induction so that each composition $\rho_n\circ\dots\circ\rho_1$ has slopes $1,p^{-1},1$, and corners at $s_1$ and $y_n$ so that $\lim_ny_n=\infty$. Indeed, one should make the choices so that the first corner is given by $x'_1=s_1$ and $x'_{n+1}=y_n$ for $n\ge 1$, the series $\sum_n(x''_n-x'_n)$ diverges (one can just take $x''_n=x'_n-\frac{1}{p}\log|p|$), and the endpoint $x_n$ satisfies $x''_n+\frac{1}{p-1}\log|p|<x_n<x'_n$. In this case $\lim_nx_n=\infty$, and hence the composition $\psi=\dots\circ\psi_2\circ\psi_1$ converges and satisfies $\Lam_t(\psi)=\dots\rho_2\circ\rho_1=\rho$.
\end{proof}

\subsection{A geometric interpretation}\label{geomsec}
One of initial motivations for this research was the question of Fargues-Fontaine whether any closed point of their (algebraic) curve has residue field isomorphic to $\CC_p$. Equivalently, whether all untilts of $\wh{\FF_p((t))^a}$ are isomorphic to $\CC_p$. This question was answered negatively in \cite{untilt} and our current results suggest a quantitative invariant which measures haw far it is from $\CC_p$. At this stage, this invariant does not seem to be too useful for studying the curve, nevertheless we prefer to describe the relevant geometry in the paper -- in the setting of the curve and in a few analogous but more standard cases. Throughout \S\ref{geomsec} we fix an algebraically closed analytic field $k$ and discuss Berkovich analytic picture, often over $k$, but the same examples work in adic geometry as well.

\subsubsection{Fields of definition}
Assume that $K$ is an analytic $k$-field and $X$ is a $K$-analytic space (so, $X$ is an analytic $k$-space in the terminology of \cite{berihes}). It often happens that the analytic space $X$ has other fields of definitions. In particular, in many cases one can deform $K$ inside of $\calO_X$ keeping $k$ fixed. This phenomenon is a direct analogy (in fact, a generalization) of the fact that fields of definition of complete local rings or non-reduced varieties are often non-unique. Here is a typical example, we will need later.

\begin{exam}\label{fiberexam}
(i) Consider two discs $X=\calM(k\{x\})$ and $Y=\calM(k\{y\})$, and let $Z=Y\times X=\calM(k\{x,y\})$ be their product. Fix the affine formal models $\gtZ=\gtX\times\gtY=\Spf(\kcirc\{x,y\})$, let $\gtz\in\gtZ_s$ be a generic point of the diagonal and let $Z_\gtz$ be the analytic fiber over $\gtz$, that is, the preimage of $\gtz$ under the reduction map $Z\to\gtZ_s$. Let $u,v$ be the maximal points of $X$ and $Y$, then the fiber $Z_u=Z\times_X u=\calM(\calH(u)\{y\})$ is a unit disc over $\calH(u)=\wh{k(x)}$, and similarly for $Z_v$, and $Z_\gtz$ is the open unit disc in both with coordinate $\pi=x-y$. Both $\calH(u)$ and $\calH(v)$ are natural fields of definition of the curve $Z_\gtz$, and there are many more choices, but no canonical one. Maybe the most illustrative way to present this example is $\kcirc\{x\}\llbracket \pi\rrbracket=\calO_Z^\circ(Z_\gtz)=\kcirc\{y\}\llbracket\pi\rrbracket$.

(ii) More generally, one can take $\gtz$ to be the generic point of a curve which maps dominantly on both $\gtX_s$ and $\gtY_s$, and then $Z_\gtz$ is given by $|P(x,y)|<1$, where $P\in k[x,y]$ is any polynomial whose reduction in $\tilk[\tilx,\tily]$ generates the ideal $m_\gtz$.
\end{exam}

\subsubsection{Comparison of geometric radii functions}
Let $Z=X\times Y=\calM(k\{x,y\})$, $\pi=x-y$ and $U=Z_\gtz=Z\{|\pi|<1\}$ be as in Example~\ref{fiberexam}(i). Set $p=\cha(\tilk)$, $K=\calH(u)=\wh{k(x)}$ and $L=\calH(v)=\wh{k(y)}$. The $K$-structure on the open unit disc $U$ with coordinate $\pi$ induces the geometric radius function $\olr_K=\olr_{\pi/K}\: U\to[0,1)$ given by $\olr_{K}(z)=\inf_{c\in K^a}|c-\pi|_z$. It is induced by the radius function on the $\whKa$-disc $\oU=U\wtimes_K\whKa$. In particular, the $K$-geometric type of a point $z\in U$ is 1 if and only if $\olr_K(z)=0$. In the same way, we have the geometric radius function $\olr_L$ induced by the $L$-structure. It is then a natural question to ask if these functions are compatible at least to a constant or a power, and if there is a natural (exponential) metric on $U$. We will later see that a canonical metric exists and coincides with $\olr_K$ and $\olr_L$ when $p=0$, but it is incomparable to the geometric ones when $p>0$.

At this moment let us only compare $\olr_K$ and $\olr_L$ when $p>0$ and the valuation on $k$ is non-trivial. Let $z\in U$ be a point and $F=\wh{\calH(z)^a}=\wh{k(x,y)^a}$. Note that $\olr_K(z)=0$ if and only if $\pi\in\whKa$ if and only if $F=\whKa=\wh{k(x)^a}$. Similarly, $\olr_L(z)=0$ if and only if $F=\whLa=\wh{k(y)^a}$. Thus it can freely happen that $\olr_K(z)=0\neq\olr_L(z)$, and the ratio $\olr_L/\olr_K$ blows up when we approach $z$. Furthermore, up to the action of $\Gal_K$, giving a point $z\in U$ of $K$-geometric type 1 is equivalent to fixing an element $y\in\whKa$ with $|y-x|<1$, and a $k$-endomorphism $\phi_z\:\whKa\to\whKa$ sending $x$ to $y$ is invertible if and only if $\olr_L(z)=0$. In general, $\olr_L(z)=\inf_{c\in\whLa}|x-c|_z$, which is the discrepancy between the metrics, measures how deeply $\whLa=\Im(\phi_z)$ sits inside $\whKa$. By Corollary~\ref{existcor} it can take any value in $[0,1)$, so $\olr_{x/L}$ induces a bit exotic function from the set of points of $\oU=U\wtimes_K\whKa$ of type 1 onto $[0,1)$. We will later explain that $\oU$ (and even $U\wtimes_K K'$ for any perfectoid $K$-subfield $K'\subseteq\whKa$) possesses unique metric and field of definition, so $\olr_{x/L}$ does not have a geometric meaning on $\oU$, but it comes from a meaningful (though not canonical) function on $U$.

\subsubsection{The curve}\label{curvesec}
Assume now that $k$ is trivially valued, for example, $k=\oFF_p$. In this case the above example also works once we replace $Z$ by a smaller (non-strict) polydisc $Z=\calM(\calA)$, where $\calA=k\{x,y\}_{r,r}=k\llbracket x\rrbracket\{y\}$ and $r<1$. In this case, $K=k((x))$, $L=k((y))$ and $\Gamma(\calO^\circ_\oU)=\oK^\circ\llbracket y\rrbracket$, where $\oK=\whKa$. In the same way $r_{y/L}$ defines a map from the set of points of $\oU$ of type 1 onto $[0,1)$, and the latter points can be identified with the closed points of $V=\Spec(B)\setminus V(x)$, where $B=\oK^\circ\llbracket y\rrbracket$.

As a mixed characteristics analogue one considers $V'=\Spec(B')\setminus V([x])$, where $B'=W(\oK^\circ)$, $p$ and the Teichmuller embedding $[\ ]\:\oK^\circ\into B'$ replace $B$, $y$ and the embedding $\oK^\circ\into B$, respectively. The set of closed points of $V'\setminus V(p)$ parameterizes untilts of $\CC_p$ and the function $r_{[x]/\CC_p}$, which maps this set onto $[0,1)$ via the rule $r_{[x]/\CC_p}(z)=\inf_{c\in\CC_p}|[x]-c|_z$, is a direct analogue of $\olr_{x/L}$. The only difference is that this time we do not have a natural descent as applying $W$ to non-perfect rings like $\Kcirc$ one does not get a reasonable (e.g. normal) ring of functions.

The curve $C$ of Hartl-Pink (introduced somewhat implicitly in \cite{Hartl-Pink}) is obtained by dividing (the analytic version of) $V\setminus V(y)$ by the Frobenius. The radius function is raised to the $p$-th power by the Frobenius, hence $\olr_{x/L}$ descends to $C$ only as a function $\olr_{x^{p^\ZZ}/L}$ with values in $[0,1)/\phi$, where $\phi$ is the $p$-th power map. In the same way, the Fargues-Fontaine $C'$ is obtained by dividing (the analytic version of) $V'\setminus V(p)$ by the Frobenius, $r_{[x]/\CC_p}$ descends to a function $r_{[x^{p^\ZZ}]/\CC_p}$ on the set of closed points of $C'$ with values in $[0,1)/\phi$, and an untilt $k(z)$ of $\oK$ coincides with $\CC_p$ if and only if $r_{[x^{p^\ZZ}]/\CC_p}(z)=0$.

\subsubsection{(Non)-uniqueness of fields of definition}
Below are two typical cases, when the field of definition is unique and non-unique, respectively.

\begin{lem}\label{uniquelem}
Let $K$ be an analytic $k$-field, $\calA$ a reduced strictly $K$-affinoid algebra provided with the spectral norm and $X=\calM(\calA)$.

(i) Assume that there exists an element $t\in K$ and a $k$-subfield $F\subset K$ such that $t\notin\wh{F^a}$ and $K$ is either a finite separable or a completed tame extension of $K_0=\wh{F(t)}$. Then there exists $\veps>0$ such that for any $t'\in\calA$ with $|t-t'|<\veps$ there exists a $k$-homomorphism $\phi\:K\into\calA$ such that $\phi(t)=t'$ and $|\phi(c)-c|<\veps|c|$ for any $c\in K$.

(ii) If $\calA$ is integral, $\cha(\tilk)>0$ and $K$ is perfectoid, then the algebraic closure of $K$ is $\calA$ is the unique maximal field of definition of $\calA$.
\end{lem}
\begin{proof}
(i) By assumption, $r=\inf_{c\in F^a}|t-c|>0$. If $K/K_0$ is a competed tame extension set $s=1$, and if $K/K_0$ is finite separable choose a primitive element $\alp$, let $r_{\alp/K_0}$ be the minimal distance between $\alp$ and its conjugates over $K_0$, and set $d_\alp=\inf_{c\in K_0}|c-\alp|$ and $s=r_\alp/d_\alp$. Now, let us show that $\veps=rs$ is as required.

First, extend the embedding $F\into\calA$ to $\phi\: F[t]\to\calA$ by sending $t$ to $t'$. By \cite[Lemma~3.1.6]{topdeg} for any $P\in F[t]$ and $x\in X$ one has that $|P-\phi(P)|_x=|P(t)-P(t')|_x\le s|P|$. Hence $|P-\phi(P)|<s|P|$ in $\calA$, and using the series to invert $1-\phi(P)/P$ one extends $\phi$ to $F(t)$, and hence also to $K_0$. Furthermore, it is easy to see that the extension also satisfies the inequality $|P-\phi(P)|<s|P|$. The final lift from $K_0$ to $K$ exists by Krasner's lemma (see \cite[Lemma~3.3.4]{insepunif}), where in case 2 we use that for any $\alp$ generating a tame extension of $K_0$ the ration $r_\alp/d_\alp$ equals one.

(ii) Replacing $K$ by a finite extension we can assume that it is algebraically closed in $\calA$, and our task is to prove that any other field of definition $K'$ algebraically closed in $\calA$ coincides with $K$. Since $\calA$ is strict, it is finite over a subalgebra $\calA'=K'\{t_1\.t_n\}$. Set $L=\Frac(\calA)$ and $L'=\Frac(\calA')$. It is easy to see that $\cap_n L'^{p^n}\subseteq K'$ and hence any element of $\cap_n L^{p^n}$ is algebraic over $K'$, yielding that $\cap_n L^{p^n}\subseteq K'$. Applying the same argument to $K=K^p$ we obtain that $\cap_n L^{p^n}=K\subseteq K'$, and therefore $K=K'$.

The mixed characteristic case can be established using the characterization of $K$ as the kernel of all bounded derivations on $\calA$. This uses that $\Omega_{\Kcirc}$ is divisible and hence $\hatOmega_K=0$, and we omit further details.
\end{proof}

We did not pursue the general case in the lemma, but a straightforward expectation is as follows.

\begin{rem}
(i) Let us assume that $K$ is non-trivially valued, $K\neq k$ and $\dim(X)>0$, ruling out the trivial obstacles to existence of a non-trivial $k$-deformation $\phi\:K\into\calA$ of the inclusion $K\subset\calA$. It seems plausible that in such a case no deformation exists if and only if $\cha(\tilk)>0$ and $K$ is perfectoid, which happens if and only if $\hatOmega_{K/k}=0$.

(ii) This is analogous to the following fact from commutative algebra: a complete local domain $A$ of characteristic $p$ and non-zero dimension possesses a unique field of definition $K\into A$ if and only if the residue field $K$ is perfect, and in this case the field of definition is $\cap_n A^{p^n}$. In the same spirit, if $K$ is perfect, then the set of Teichmuller elements of $R=W(K)$ coincides with $\cap_n R^{p^n}$.
\end{rem}

\subsubsection{Metrics on curves over algebraically closed fields}
Assume that $k$ is algebraically closed. Any $k$-curve $C$ possesses a canonical metric as follows. If $I\subset C$ is an interval with endpoints of type 2 or 3, the semistable reduction theorem implies that one can subdivide $I$ into intervals $I_1\.I_n$ which are skeletons of annuli $A_1\.A_n$ and we define the length by $\mu(I)=\sum_i M(A_i)$, where the modulus $M(A)$ of an annulus $A$ is defined by embedding it as $A=A(0;r_1,r_2)\subset\bbA^1_k$ and setting $M(A)=\log(r_2)-\log(r_1)$. This definition extends to arbitrary intervals by continuity, with $\mu(I)=\infty$ if and only if at least one endpoint is of type 1. Sometimes it is also convenient to use the exponential metric, so by the {\em radii} of $I$ and $A$ we mean $r(I)=e^{-\mu(I)}\in[0,1]$ and $r(A)=e^{-M(A)}=r_1/r_2$.

\begin{rem}\label{distrem}
It is easy to see that the above definition is independent of choices, but the most conceptual way is to observe that the metric is induced by restricting onto $I$ the sheaf of absolute values $|\calO_C^\times|=\calO_C^\times/(\calO^\circ_C)^\times$, whose stalk at $x$ is $|\calH(x)^\times|/|k^\times|$, and taking the logarithms. More concretely, $\mu(I)$ is the minimal length of an interval of the form $\phi(I)$, where $\phi\:I\to\bbR$ is strictly increasing and locally of the form $\log|f|$ with $f\in\calO_C^\times$ an invertible analytic function.
\end{rem}

\subsubsection{Geometric and canonical metrics}
If $k$ is arbitrary, then there are two natural metrics on a $k$-analytic curve $C$. First, the metric on $\oC=C\wtimes_k\whka$ is compatible with any automorphisms, hence descends to the {\em geometric metric} on $C$ -- for any interval $I=[a,b]\subset C$ choose a lift $\oI=[\oa,\ob]\subset\oC$ and define the geometric length and radius of $I$ by $\omu(I)=\mu(\oI)$ and $\olr(I)=r(\oI)$. This definition depends on the field of definition, and, as we have already seen, depends essentially.

On the other hand, the property in Remark \ref{distrem} is independent of $k$ and uses only logarithms of absolute values of functions. This leads to the definition of a canonical metric that we denote $\mu(I)$ and $r(I)$. By the definition, $\mu(I)\le\omu(I)$. It is not too difficult to show that $\mu=\omu$ if and only if either $\cha(\tilk)=0$ or $k$ is perfectoid, but this will be done elsewhere.

\begin{exam}
(i) Let $C=\bbA^1_k$ with coordinate $t$. Assume that $l=k(\alp)$ is of degree $p$ over $k$ and let $f_\alp\in k[t]$ be the minimal polynomial of $\alp$. Let $d_\alp=\inf_{c\in k}|c-\alp|$ and let $r_\alp$ be the distance between $\alp$ and any root of $f_\alp/(t-\alp)$. Let $R_\alp=[\alp,\infty)\subset C$ be the ray starting at $\alp$. The geometric radius parametrization of $R_\alp$ is given by $\olr(x)=\inf_{c\in k^a}|t-c|_x$, and we use it to identify $R_\alp$ with $\RR_{\ge 0}$. By Krasner's lemma $D(\alp,r_\alp)$ is a split $l$-disc, hence the geometric and the canonical metrics on $[0,r_\alp]$ coincide. The points of geometric radius $\olr>d_\alp$ are maximal point of $k$-split discs, so both metrics coincide on $[d_\alp,\infty)$ as well. Finally, the points in the interval $(r_\alp,d_\alp)$ are maximal points of non-split discs, and it is easy to see that these discs do not even contain Zariski closed points $x$ such that $k(x)/k$ is tame. It follows easily that any strictly increasing function of the form $|f(t)|$ on any subinterval $I$ is of the form $|P|/|Q|$, where $P,Q$ are polynomials of degree divisible by $p$. Therefore, $r(I)=\olr(I)^p$, $\mu(I)=p\cdot\omu(I)$ and the canonical metric on $R_\alp$ is obtained from the geometric metric by rescaling it by $p$ on $[r_\alp,d_\alp]$ and keeping unchanged for $\olr<r_\alp$ and $\olr>d_\alp$.

(ii) Similarly to the above, one can straightforwardly construct a nested sequence of discs $E_n$ of splitting degree $p^n$ so that the intersection point $z=\cap_n E_n$ is of geometric type 4 but the canonical distance to it is infinite. Thus $z$ is a singular point for the canonical metric but a regular point for the geometric one.

(iii) Moreover, in the situation of Example~\ref{fiberexam} with an algebraically closed $k$ one can use the same ideas as above to provide an elementary construction of a point $z\in Z_\gtz\subset Z=X\times Y=\bbA^2_k$ such that the projections $x\in X$ and $y\in Y$ are of types 1 and 4, respectively. This is equivalent to constructing a non-invertible automorphism of $\wh{k(x)^a}$. Our constructions of such automorphisms used computations with completed differentials, while the method we outline here is more straightforward and geometrically motivated.
\end{exam}

\section{Large extensions}\label{largesec}

\subsection{Large extensions of finite topological transcendence degree}
So far we have studied small extensions, which always have a finite topological transcendence basis $x_1\.d_d$ and satisfy $d=\topdeg(K/k)=\Topdeg(K/k)$. In this subsection we will study the remaining class of extensions of finite topological transcendence degree, the large ones. Recall that these are extensions satisfying $\topdeg(K/k)<\infty=\Topdeg(K/k)$, they do not possess a topological transcendence basis, and we already know from Corollary~\ref{equalcor} that they necessarily satisfy $\aleph_0<\Topdeg(K/k)$. Surprisingly, one can say quite a bit about such extensions and the mechanism preventing them from having a topological transcendence basis.

\subsubsection{Large extensions ot topological degree one}
The one-dimensional case is very concrete:

\begin{theor}\label{large1th}
Let $K/k$ be an extension of algebraically closed analytic fields such that $\topdeg(K/k)=1$ and $\cha(\tilk)>0$. Then the following conditions are equivalent:
\begin{itemize}
\item[(1)] The extension $K/k$ is large.
\item[(2)]  $\Topdeg(K/k)\ge\aleph_1$
\item[(3)]  $\Topdeg(K/k)=\aleph_1$
\item[(4)] The family $\{F_i\}_{i\in I}$ of intermediate algebraically closed analytic subextensions $k\subsetneq F_i\subsetneq K$ has uncountable cofinality.
\item[(5)] The family $\{F_i\}_{i\in I}$ has cofinality $\aleph_1$.
\item[(6)] $\omega_1$ is the only uncountable ordinal that can be embedded into $I$.
\item[(7)] $K$ is the (uncompleted) union of $F_i$.
\item[(8)] $K$ is a maximal extension of $k$ of topological degree one.
\item[(9)] Any $k$-endomorphism of $K$ is an automorphism.
\end{itemize}
\end{theor}
\begin{proof}
First, assume that the extension is not large, say, $K=\wh{k(x)^a}$, and let us show that all other conditions are not satisfied too. This is obvious for (2) and (3), conditions (4), (5), (6) fail by Theorem~\ref{subfieldth}, (7) fails by Lemma~\ref{cofinalem}, (8) fails by \cite[Theorem~5.2.2]{topdeg}, and (9) fails by \cite[Theorem~5.2.5]{topdeg}.

Conversely, assume that $K/k$ is large and let us deduce all other conditions. First, if (8) fails, then there exists an extension $K\subsetneq L$ such that $\topdeg(L/k)=1$ and then any $x\in L\setminus K$ is t.a. independent of $K$ and hence $K\subset\wh{k(x)^a}$, which contradicts Theorem~\ref{monotonic}. By maximality, any $k$-endomorphism of $K$ maps it onto itself, hence (9) holds too. By Theorem~\ref{unionth} (4) holds, and hence (7) holds by Lemma~\ref{cofinalem}. To deduce (6), and hence also (5), from (4) we should show that if $J$ is an uncountable well ordered subset of $I$, then $J$ is cofinal. Indeed, $\cup_{j\in J}F_j$ is a large extension of $k$ by the above paragraph, hence it is maximal by (8), and thus  $\cup_{j\in J}F_j=K$. Finally, by (6) there exists an uncountable cofinal family $\{F_j\}_{j\in J}$ of intermediate fields and $J$ is of cardinality $\aleph_1$. Since, each $F_j$ is not large, $F_j=\wh{k(x_j)^a}$, and hence the set $\{x_j\}_{j\in J}$ t.a. generates $K$ over $k$, and we obtain that $\Topdeg(K/k)\le\aleph_1$. Since $\Topdeg(K/k)$ is uncountable, (3) holds, and hence also (2).
\end{proof}

\begin{cor}
Assume that $K/k$ is a large extension of topological transcendence degree one. Then the ordered set $I_{K/k}$ of intermediate algebraically closed analytic fields is naturally homeomorphic to an Alexandroff ray (also known as a long ray) compactified by adding a point at infinity.
\end{cor}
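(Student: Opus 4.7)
The plan is to verify that $I_{K/k}$, equipped with the order topology coming from inclusion, satisfies the topological characterization of the compactified long ray: it should be a compact, connected, linearly ordered Hausdorff space with a minimum (namely $k$) and a maximum (namely $K$) whose proper closed initial segments $[k,F]$ with $F\neq K$ are all homeomorphic to $[0,1]$, and with cofinality $\aleph_1$ below the maximum.

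First I would establish that $I_{K/k}$ is Dedekind complete and densely ordered, so that its order topology is compact and connected. Dedekind completeness is the observation, already recorded in Theorem~\ref{Stheorem}, that arbitrary intersections of algebraically closed analytic subfields of $K$ and completions of their unions are again algebraically closed analytic; they therefore furnish infima and suprema inside $I_{K/k}$. Absence of jumps follows by applying Theorem~\ref{Stheorem} to any strict inclusion $F\subsetneq F'$ with $F'\neq K$: such $F'$ is small by Theorem~\ref{large1th}(8), so $\Topdeg(F'/F)=1$, and Theorem~\ref{Stheorem} applied to the extension $F'/F$ produces intermediate algebraically closed analytic fields strictly between $F$ and $F'$. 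For the remaining case of inclusions $F\subsetneq K$, the cofinal $\omega_1$-chain supplied by Theorem~\ref{large1th}(5) exhibits intermediate fields above $F$.

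Next I would observe that every proper initial segment $[k,F]$ is homeomorphic to $[0,1]$: by Theorem~\ref{large1th}(8) such $F$ is small, so $F=\wh{k(x)^a}$ for some $x$, and Theorem~\ref{Stheorem} gives an order-reversing bijection $\rho_x\colon I_{F/k}\to [0,r_k(x)]$ respecting suprema and infima, which is therefore a homeomorphism between the order topologies. Combined with the statement that the cofinality of $I_{K/k}\setminus\{K\}$ equals exactly $\omega_1$ (Theorem~\ref{large1th}(5)), these properties uniquely characterize the compactified Alexandroff long ray.

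To make the identification concrete I would fix a cofinal chain $k=F_0\subsetneq F_1\subsetneq\cdots$ indexed by $\omega_1$, with $F_\lambda$ the completion of $\cup_{\alpha<\lambda}F_\alpha$ at countable limit ordinals, and build order-preserving homeomorphisms of each closed interval $[F_\alpha,F_{\alpha+1}]$ with a corresponding unit interval in the long ray by transfinite induction, taking direct limits at countable limits and finally sending $K$ to the point at infinity. The only nontrivial point is continuity of the direct limit at countable limit ordinals $\lambda$, and this reduces to the statement from Theorem~\ref{Stheorem} that $F_\lambda$ is the supremum of $\{F_\alpha\}_{\alpha<\lambda}$ in $I_{K/k}$, matching the analogous supremum on the long-ray side and forcing the glued bijection to be a homeomorphism at $F_\lambda$.
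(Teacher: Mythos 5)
Your proposal is correct and follows essentially the same route as the paper: Theorem~\ref{large1th} supplies a cofinal $\omega_1$-chain of (necessarily small) intermediate fields, Theorem~\ref{Stheorem} identifies each initial segment $[k,F]$ with a closed real interval, and one glues these $\omega_1$ intervals to obtain the long ray, with $K$ as the added point at infinity. You merely spell out the order-topological verifications (completeness, density, continuity at countable limits) that the paper's one-line argument leaves implicit.
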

\begin{proof}
By the above Theorem there exists a cofinal chain of fields $\{F_i\}_{i\in \omega_1}\subset I_{K/k}$. Hence $I_{K/k}\setminus\{K\}$ is the filtered union of $\omega_1$ closed intervals, which implies that $I_{K/k}\setminus\{K\}$ is a long ray and adding the maximal point $K$ compactifies it at infinity.
\end{proof}

\subsubsection{Cofinality}
It is less clear, what can be said about large extensions in general, but at least one can use the same argument to bound the cofinality of $I_{K/k}$.

\begin{theor}\label{cofinalityth}
Let $K/k$ be an extension of algebraically closed analytic fields such that $\topdeg(K/k)=d<\infty$ and $\cha(\tilk)>0$. Then the cofinality of the set $I_{K/k}$ does not exceed the ordinal $d\omega_1$.
\end{theor}
\begin{proof}
If the cofinality is larger, then we can find subfields $k=F_0,F_1\.F_d\in I_{K/k}$ such that $F_d\neq K$ and each interval $[F_i,F_{i+1}]=I_{F_{i+1}/F_i}$ in $I_{K/k}$ has uncountable cofinality. We claim that $\topdeg(K/F_{i+1})<\topdeg(K/F_i)$ for any $i$. Once this is proved, one obtains that $\topdeg(K/F_d)=0$ and hence $K=F_d$, giving a contradiction.

Without restriction of generality it suffices to show that $\topdeg(K/F)<d$ for $F=F_1$. Assume that this is not so, and hence there exists $x_1\.x_d\in K$ which are t.a. independent over $F$. For any $x_0\in F$ the elements $x_0\.x_d$ are t.a. dependent over $k$ and this leaves only one possibility: $x_0\in\wh{k(x_1\.x_d)^a}$. Thus, the small extension $L=\wh{k(x_1\.x_d)^a}$ of $k$ contains $F$ and hence $I_{L/k}$ has uncountable cofinality, which contradicts Theorem~\ref{subfieldth}.
\end{proof}

\begin{rem}
Assume that $K/k$ is a maximal extension of $k$ of topological transcendence degree $d$. Then the theorem easily implies that the cofinality of the set $I_{K/k}$ is $d'\omega_1$, where $1\le d'\le d$. Of course it can happen that $d'=d$. For example, this is the case when $K/k$ splits into a tower of $d$ large extensions of topological transcendence degree one. It is a natural question, if any positive $d'<d$ can occur as well.
\end{rem}

\subsection{Further questions}
I do not know about possible applications, but the natural further question to study is to which extent the same results hold for countably generated analytic extensions. In particular, this includes the following

\begin{question}
(i) Assume that $\Topdeg(K/k)=\aleph_0$. What is the maximal cofinality of a chain of intermediate algebraically closed analytic fields? Can it be uncountable?

(ii) Assume that $\topdeg(K/k)=\aleph_0$. What is the maximal cardinality of  $\Topdeg(K/k)$? Can it exceed $\aleph_1$?

(iii) Can it happen that $\aleph_1\le\topdeg(K/k)<\Topdeg(K/k)$?
\end{question}

\bibliographystyle{amsalpha}
\bibliography{pathologies}

\end{document}